\begin{document}

\newtheorem{theorem}{Theorem}[section]
\newtheorem{corollary}[theorem]{Corollary}
\newtheorem{definition}[theorem]{Definition}
\newtheorem{lemma}[theorem]{Lemma}
\newtheorem{proposition}[theorem]{Proposition}
\newtheorem{example}[theorem]{Example}
\theoremstyle{remark}
\newtheorem{remark}{Remark}[section]

\title[New orthogonality relations for super-Jack polynomials]{New orthogonality relations for super-Jack polynomials and an associated Lassalle--Nekrasov correspondence}

\author{Martin Halln\"as}
\email{hallnas@chalmers.se}
\address{Department of Mathematical Sciences, Chalmers University of Technology and the University of Gothenburg, SE-412 96 Gothenburg, Sweden}

\thanks{Supported by the Swedish Research Council (Project-id 2018-04291).}

\date{\today}

\subjclass{33C52, 81Q80, 81R12}
\keywords{Orthogonal polynomials, super-Jack polynomials, Calogero--Moser--Sutherland systems, Lassalle--Nekrasov correspondence}

\begin{abstract}
The super-Jack polynomials, introduced by Kerov, Okounkov and Olshanski, are polynomials in $n+m$ variables, which reduce to the Jack polynomials when $n=0$ or $m=0$ and provide joint eigenfunctions of the quantum integrals of the deformed trigonometric Calogero--Moser--Sutherland system. We prove that the super-Jack polynomials are orthogonal with respect to a bilinear form of the form $(p,q)\mapsto (L_pq)(0)$, with $L_p$ quantum integrals of the deformed rational Calogero--Moser--Sutherland system. In addition, we provide a new proof of the Lassalle--Nekrasov correspondence between deformed trigonometric and rational harmonic Calogero--Moser--Sutherland systems and infer orthogonality of super-Hermite polynomials, which provide joint eigenfunctions of the latter system.
\end{abstract}

\maketitle

\tableofcontents

\section{Introduction}
As is well-known, the (symmetric) Jack polynomials $P^{(\theta)}_\lambda(x_1,\ldots,x_N)$, labelled by a partition $\lambda$ and depending rationally on a parameter $\theta$, have numerous remarkable properties. In particular, they form an orthogonal system with respect to the weight function
\begin{equation}
\label{DeltaN}
\Delta_N(x;\theta) = \prod_{1\leq i<j\leq N}[(1-x_i/x_j)(1-x_j/x_i)]^\theta
\end{equation}
on the $N$-torus; and they are joint eigenfunctions of the trigonometric Calogero--Moser--Sutherland operator
\begin{equation}
\label{cLN}
\mathcal{L}_N(x;\theta) = \sum_{i=1}^N \left(x_i\frac{\partial}{\partial x_i}\right)^2 + \theta\sum_{1\leq i<j\leq N}\frac{x_i+x_j}{x_i-x_j}\left(x_i\frac{\partial}{\partial x_i}-x_j\frac{\partial}{\partial x_j}\right)
\end{equation}
and its quantum integrals. For historical accounts of Henry Jack, his construction of the polynomials that now bear his name as well as many more of their properties; see e.g.~the proceedings \cite{JHLM06} and Macdonald's book \cite{Mac95}. We note that the parameter $\theta$ is the inverse of the parameter $\alpha$ used, in particular, in \cite{Mac95}.

In addition to the root system generalisations introduced by Olshanetsky and Perelomov \cite{OP83}, the operator \eqref{cLN} allows for non-symmetric integrable generalisations, such as
\begin{equation}
\label{cLnm}
\mathcal{L}_{n,m}(x,y;\theta) = \mathcal{L}_n(x;\theta) - \theta\mathcal{L}_m(y;1/\theta) - \sum_{i=1}^n\sum_{j=1}^m \frac{x_i+y_j}{x_i-y_j}\left(x_i\frac{\partial}{\partial x_i}+\theta y_j\frac{\partial}{\partial y_j}\right).
\end{equation}
This operator was first introduced in the $m=1$ case by Chalykh, Feigin and Veselov \cite{CFV98}, who proved integrability, expressed the operator in terms of a particular deformation of the root system $A_n$ and introduced the terminology {\it deformed Calogero--Moser--Sutherland operator}. Shortly thereafter, Sergeev \cite{Ser01,Ser02} wrote down the operator \eqref{cLnm} for general $m$ and showed that it has the so-called super-Jack polynomials $SP^{(\theta)}_\lambda((x_1,\ldots,x_n),(y_1,\ldots,y_m))$, introduced a few years earlier by Kerov, Okounkov and Olshanski \cite{KOO98}, as eigenfunctions. These results on integrability and eigenfunctions were then extended to arbitrary $m>1$ and all quantum integrals, respectively, by Sergeev and Veselov \cite{SV04,SV05}.

At an early stage, it became clear that the orthogonality of the Jack polynomials with respect to the weight function \eqref{DeltaN} could not be directly generalised to the super-Jack polynomials. Indeed, starting from Sergeev's work and proceeding formally, one is naturally led to the weight function
$$
\Delta_{n,m}(x,y;\theta) = \frac{\Delta_n(x;\theta)\Delta_m(y;1/\theta)}{\prod_{i=1}^n\prod_{j=1}^m(1-x_i/y_j)(1-y_j/x_i)},
$$
which is manifestly singular along the hyperplane $x_i=y_j$ for all $1\leq i\leq n$ and $1\leq j\leq m$ and any value of $\theta$.

Together with Atai and Langmann \cite{AHL19}, we circumvented this problem by integrating $x$ and $y$ over tori with different radii. Even though this meant dealing with a complex-valued weight function, we could prove that the resulting sesquilinear form is Hermitian on the space spanned by the super-Jack polynomials. Moreover, we identified its kernel as the subspace spanned by the $SP^{(\theta)}_\lambda$ with $(m^n)\not\subset\lambda$, and showed that the form descends to a positive definite inner product on the corresponding factor space. As a consequence, we obtained a Hilbert space interpretation of the deformed Calogero--Moser--Sutherland operator \eqref{cLnm}. These results were essentially all inferred from orthogonality relations, including an explicit formula for (quadratic) norms, for super-Jack polynomials.

In this paper, we prove orthogonality of the super-Jack polynomials with respect to another rather different bilinear form. Initially, we will define it in terms of deformed rational Calogero--Moser--Sutherland operators, but, for suitable values of $\theta$, it also has the integral representation
\begin{multline}
\label{intRep}
(p,q)_{n,m}\\
= M_{n,m}^{-1} \int_{\mathbb{R}^n+i\xi}\int_{\mathbb{R}^m+i\eta} \big(e^{-L_{n,m}/2}p\big)(x,y)\big(e^{-L_{n,m}/2}q\big)(x,y)\frac{e^{-x^2/2+\theta^{-1}y^2/2}}{A_{n,m}(x,y)}dxdy,
\end{multline}
with
$$
A_{n,m}(x,y) = \prod_{1\leq i<j\leq n}(x_i-x_j)^{-2\theta}\cdot \prod_{1\leq i<j\leq m}(y_i-y_j)^{-2/\theta}\cdot \prod_{i=1}^n\prod_{j=1}^m(x_i-y_j)^2,
$$
and where $M_{n,m}$ is a normalisation constant, $\xi\in\mathbb{R}^n$ and $\eta\in\mathbb{R}^m$ are chosen such that all singularities are avoided, $x^2=x_1^2+\cdots+x_n^2$, $y^2=y_1^2+\cdots+y_m^2$ and $L_{n,m}$ denotes the rational limit of \eqref{cLnm}.

When $m=0$ this form amounts to a restriction of the $A_{n-1}$-instance of Dunkl's \cite{Dun91} bilinear form $[p,q]_\theta=(p(D)q)(0)$ to symmetric polynomials $p,q$. Here, $p(D)$ denotes the operators obtained from $p(x)$ by substituting for $x_i$ a Dunkl operator $D_{i,n}$ (see Eq.~\eqref{DiN} below). An integral representation similar to \eqref{intRep} was established in the same paper by Dunkl. Corresponding orthogonality results for (non-symmetric) Jack polynomials were later deduced by Baker and Forrester \cite{BF98} as well as R\"osler \cite{Ros98}.

Restricting attention further to $\theta=0$, we recover the bilinear form $[p,q]_\partial:=(p(\partial)q)(0)$, where $p(\partial)=p(\partial/\partial x_1,\ldots,\partial/\partial x_n)$, for which Macdonald \cite{Mac80} proved $[p,q]_\partial=(2\pi)^{-n/2}\int_{\mathbb{R}^n}(e^{-\Delta/2}p)(x)(e^{-\Delta/2}q)(x)e^{-x^2/2}dx$. In this special case, Jack polynomials are simply symmetric monomials $m_\lambda$ and their orthogonality on the $n$-torus $T^n$ simply amounts to $\int_{T^n}m_{\mu}(x)m_\lambda(x^{-1})dx=|S_n(\mu)|\cdot\delta_{\mu\lambda}$, with $\delta_{\mu\lambda}$ the Kronecker delta, $S_n(\mu)$ the orbit of $\mu$ under the standard action of $S_n$ and $dx$ the normalised Haar measure on $T^n$.

Moreover, if $m=1$ and $\theta\in-\mathbb{N}$, then $(\cdot,\cdot)_{n,m}$ amounts to a (restricted) instance of the bilinear form on quasi-invariants introduced and studied by Feigin and Veselov \cite{FV02,FV03}, with an integral representation of the form \eqref{intRep} obtained in \cite{FHV13}.

From \eqref{intRep} and our orthogonality results for the super-Jack polynomials, we infer that the polynomials
\begin{equation}
\label{SH}
SH^{(\theta)}_\lambda(x,y) := e^{-L_{n,m}/2}SP^{(\theta)}_\lambda(x,y)
\end{equation}
form an orthogonal system on $(\mathbb{R}^n+i\xi)\times(\mathbb{R}^m+i\eta)$ with respect to the weight function $e^{-x^2/2+\theta^{-1}y^2/2}/A_{n,m}(x,y)$. These polynomials coincide with the so-called super-Hermite polynomials introduced in \cite{DH12} (see Prop.~6.7); and for $m=0$ they amount to the generalised Hermite polynomials first introduced by Lassalle \cite{Las91} and studied in further detail by Baker and Forrester \cite{BF97} and van Diejen \cite{vDie97}.

In recent joint work with Feigin and Veselov \cite{FHV21}, we showed that $e^{-L_{n,m}/2}$ intertwines quantum integrals of deformed Calogero--Moser--Sutherland operators of trigonometric and rational harmonic type, and so \eqref{SH} can be interpreted as a correspondence between joint eigenfunctions of these two integrable systems. Motivated by Lassalle's construction of generalised Hermite polynomials and Nekrasov's \cite{Nek97} discovery that the ordinary A type trigonometric and rational harmonic Calogero--Moser--Sutherland systems are essentially equivalent, we proposed for such a correspondence the terminology {\it Lassalle--Nekrasov correspondence}.

Our results entail that this particular example of a Lassalle--Nekrasov correspondence between deformed Calogero--Moser--Sutherland systems is isometric in the sense that the operator $e^{-L_{n,m}/2}$ becomes an isometry when its domain is equipped with the bilinear form $(\cdot,\cdot)_{n,m}$ and its codomain with the form given by the right-hand side of \eqref{intRep} with $e^{-L_{n,m}/2}$ removed. In the undeformed case $m=0$ this can be seen already in \cite{Dun91}; see also \cite{Ros98}.

We conclude this introduction with an outline of the remainder of the paper. In Section \ref{Sec:Prel}, we review definitions and results pertaining to (super-)Jack polynomials and (deformed) rational Calogero--Moser--Sutherland operators that we make use of. Readers familiar with these matters may wish to skip ahead to Section \ref{Sec:Hyperg}, where particular instances of generalised hypergeometric series associated with Jack polynomials, Jack symmetric functions or super-Jack polynomials are recalled and shown to be joint eigenfunctions of (deformed) rational Calogero--Moser--Sutherland quantum integrals. In Section \ref{Sec:Form}, we introduce the relevant bilinear form and establish a number of its basic properties, identify its reproducing kernel as a generalised hypergeometric series and make the integral representation \eqref{intRep} precise. We infer orthogonality relations for super-Jack polynomials in Section \ref{Sec:Orth} and our results on the Lassalle--Nekrasov correspondence between deformed trigonometric and rational harmonic Calogero--Moser--Sutherland systems are contained in Section \ref{Sec:LN}. In the final Section \ref{Sec:Out}, we provide a brief outlook on possible directions for future research; and in Appendix \ref{App:Conv}, we study convergence properties of the pertinent generalised hypergeometric series associated with super-Jack polynomials.

\subsection*{Notation}
We use the convention $\mathbb{N}=\{1,2,\ldots\}$ and write $\mathbb{Z}_{\geq 0}$ for $\mathbb{N}\cup\{0\}$. To a large extent, we follow Macdonald's book \cite{Mac95} for notation and terminology from the theory of symmetric functions. One notable exception is our use of the parameter $\theta$, which is the inverse of the parameter $\alpha$.

\section{Preliminaries}
\label{Sec:Prel}
In this section, we specify our basic notations and review terminology and results relating to (super-)Jack polynomials and (deformed) rational Calogero--Moser--Sutherland operators that we rely on in our construction of a bilinear form in Section \ref{Sec:Form}.

\subsection{Symmetric functions}
Throughout the paper, we shall work over the complex numbers $\mathbb{C}$. Therefore, we write simply $\Lambda_N$ for the algebra of symmetric polynomials in $N$ independent variables with complex coefficients and $\Lambda_N^k$ for the subspace consisting of homogeneous symmetric polynomials of degree $k$.

Given an infinite sequence $x=(x_1,x_2,\ldots)$ of independent variables, we recall that a homogeneous symmetric function of degree $k$ with complex coefficients can be viewed as a formal power series
$$
f(x) = \sum_\alpha c_\alpha x^\alpha,
$$
where the sum extends over all sequences $\alpha=(\alpha_1,\alpha_2,\ldots)$ of non-negative integers such that $\alpha_1+\alpha_2+\cdots=k$, each $c_\alpha\in\mathbb{C}$, $x^\alpha=x_1^{\alpha_1}x_2^{\alpha_2}\cdots$ and $f(x_{\sigma(1)},x_{\sigma(2)},\ldots)=f(x_1,x_2,\ldots)$ for any permutation $\sigma$ of $\mathbb{N}$; see e.g.~Chapter 7 in Stanley's book \cite{Sta99}.

We let $\Lambda^k$ denote the space of all (complex) homogenous symmetric functions of degree $k$. If $f\in\Lambda^k$ and $g\in\Lambda^l$, it is clear that $fg\in\Lambda^{k+l}$, which implies that
$$
\Lambda := \Lambda^0\oplus \Lambda^1\oplus\cdots
$$
has a natural structure of a graded $\mathbb{C}$-algebra, called the ($\mathbb{C}$-)algebra of symmetric functions.

Partitions $\lambda=(\lambda_1,\lambda_2,\ldots)$ of $k$, i.e.~sequences of non-negative integers $\lambda_1,\lambda_2,\ldots$ such that $\lambda_1+\lambda_2+\cdots=k$, naturally label bases in $\Lambda^k$. There are numerous important examples and we use, in particular, the following:

\begin{enumerate}
\item Monomial symmetric functions:
$$
m_\lambda = \sum_\alpha x^\alpha,
$$
with the sum extending over all distinct permutations $\alpha=(\alpha_1,\alpha_2,\ldots)$ of the parts of $\lambda=(\lambda_1,\lambda_2,\ldots)$.

\item Power sum symmetric functions:
$$
p_\lambda = p_{\lambda_1}p_{\lambda_2}\cdots
$$
with $p_0\equiv 1$ and
$$
p_r = x_1^r+x_2^r+\cdots\ \ \ (r\in\mathbb{N}).
$$
\end{enumerate}

\subsection{Jack symmetric functions}
We recall that, as $\lambda$ runs through all partitions $\lambda$ of weight $k\in\mathbb{Z}_{\geq 0}$, the (monic) Jack symmetric functions $P^{(\theta)}_\lambda$ form an orthogonal basis in $\Lambda^k$ with respect to the the scalar product given by
\begin{equation}
\label{pOrth}
\langle p_\lambda,p_\mu\rangle = \theta^{-l(\lambda)}z_\lambda\delta_{\lambda\mu},
\end{equation}
where $l(\lambda)$ denotes the length of $\lambda$ and $z_\lambda=\prod_{i\geq 1}i^{m_i}\cdot m_i!$ with $m_i=m_i(\lambda)$ the multiplicity of $i$ in $\lambda$.

More specifically, they are characterized by the following two properties:
\begin{enumerate}
\item $P^{(\theta)}_\lambda(x)=m_\lambda(x)+\sum_{\mu<\lambda}u^{(\theta)}_{\lambda\mu}m_\mu(x)$,

\item $\big\langle P^{(\theta)}_\lambda,P^{(\theta)}_\mu\big\rangle=0$ whenever $\lambda\neq\mu$,
\end{enumerate}
where $<$ refers to the dominance order. (Note that, since the dominance order is only a partial order, it is far from obvious that such symmetric functions actually exist.) The coefficients $u^{(\theta)}_{\lambda\mu}$ are known to be rational functions of $\theta$, with poles occurring only at negative rational values of $\theta$. To ensure that such poles are not encountered and also that we avoid the singularity at $\theta=0$ in \eqref{pOrth}, we consider throughout the paper only values of $\theta$ not of the form
\begin{equation}
\label{thetaValsPrel}
\theta = i/j,\ \ \ i=-\mathbb{Z}_{\geq 0},\ \ j\in\mathbb{N}.
\end{equation}

The Jack symmetric functions were first introduced by Jack \cite{Jac70}, while the above characterisation is due to Macdonald \cite{Mac95}.

When setting $x_i=0$ for $i>N\in\mathbb{N}$, the Jack symmetric function $P^{(\theta)}_\lambda(x)$ specialises to the Jack polynomial $P^{(\theta)}_\lambda(x_1,\ldots,x_N)$ in the remaining $N$ variables $x_1,\ldots,x_N$.

For later reference, we record Stanley's \cite{Sta89} (see also \cite{Mac95}) quadratic norms formula
\begin{equation}
\label{qNorms}
\big\langle P^{(\theta)}_\lambda,P^{(\theta)}_\lambda\big\rangle = \frac{1}{b_\lambda^{(\theta)}},\ \ \ b_\lambda^{(\theta)} = \prod_{s\in\lambda}\frac{a(s)+\theta l(s)+\theta}{a(s)+1+\theta l(s)},
\end{equation}
and specialisation formula
\begin{equation}
\label{spec}
\epsilon_X\left(P_\lambda^{(\theta)}\right) = \prod_{s\in\lambda}\frac{\theta X+a^\prime(s)-\theta l^\prime(s)}{a(s)+\theta l(s)+\theta},
\end{equation}
where $\epsilon_X:\Lambda\to\mathbb{C}[X]$ denotes the homomorphism given by
\begin{equation}
\label{epsX}
\epsilon_X:p_r\mapsto X\ (r\in\mathbb{N}).
\end{equation}
In particular, the value at $x=1^N$ is obtained by setting $X=N$.

\subsection{Super-Jack polynomials}
For $n,m\in\mathbb{Z}_{\geq 0}$, we let
$$
P_{n,m} = \mathbb{C}[x_1,\ldots,x_n,y_1,\ldots,y_m].
$$
From \cite{SV04,SV05}, we recall its subalgebra $\Lambda_{n,m}$, consisting of all polynomials $p(x,y)$ that are symmetric in $x=(x_1,\ldots,x_n)$, symmetric in $y=(y_1,\ldots,y_m)$ and satisfy the quasi-invariance condition
\begin{equation}
\label{qInv}
\left(\frac{\partial}{\partial x_i}+\theta\frac{\partial}{\partial y_j}\right)p(x,y) = 0
\end{equation}
on each hyperplane $x_i=y_j$ with $i=1,\ldots,n$ and $j=1,\ldots,m$; the (for generic values of $\theta$) surjective homomorphism
\begin{equation}
\label{varphinm}
\varphi_{n,m}: \Lambda\to\Lambda_{n,m},\ p_r\mapsto p_{r,\theta}(x,y)\ (r\in\mathbb{N}),
\end{equation}
given in terms of the deformed power (or Newton) sums
$$
p_{r,\theta}(x,y) = \sum_{i=1}^n x_i^r - \frac{1}{\theta}\sum_{j=1}^m y_j^r;
$$
and the fact that the kernel of $\varphi_{n,m}$ is spanned by the Jack symmetric functions $P^{(\theta)}_\lambda$ labelled by the partitions $\lambda\notin H_{n,m}$, where
$$
H_{n,m} = \{\lambda\in\mathscr{P}\mid \lambda_{n+1}\leq m\},
$$
(with $\mathscr{P}$ denoting the set of all partitions).

For $\lambda\in H_{n,m}$, the super-Jack polynomial $SP^{(\theta)}_\lambda(x,y)\in\Lambda_{n,m}$ is given by
\begin{equation}
\label{SP}
SP^{(\theta)}_\lambda(x,y) = \varphi_{n,m}\big(P^{(\theta)}_\lambda\big).
\end{equation}
We note that they were originally introduced by Kerov, Okounkov \and Olshanski \cite{KOO98} in the setting of infinitely many variables and further studied by Sergeev and Veselov \cite{SV04,SV05} in the present context of $n+m$ variables.

\subsection{Rational Calogero--Moser--Sutherland operators}
It is readily seen that when substituting $x_i\to e^{i2\pi x_i/\mu}$, rescaling by $-(2\pi/\mu)^2$ and taking the period $\mu\to\infty$ the trigonometric Calogero--Moser--Sutherland operator \eqref{cLN} degenerates to its rational counterpart
\begin{equation}
\label{LN}
L_N := \sum_{i=1}^N\frac{\partial^2}{\partial x_i^2}+2\theta\sum_{1\leq i<j\leq N}\frac{1}{x_i-x_j}\left(\frac{\partial}{\partial x_i}-\frac{\partial}{\partial x_j}\right).
\end{equation}

We rely on the observation that the algebra of symmetric quantum integrals of this operator is given by the differential operators
\begin{equation}
\label{Lp}
L_{p,N} := \mathrm{Res}\big(p\big(D_{1,N},\ldots,D_{N,N}\big)\big)\ \ \ (p\in\Lambda_N),
\end{equation}
where $\mathrm{Res}$ stands for restriction to $\Lambda_N$ and $D_{i,N}$ ($i=1,\ldots,N$) are Dunkl differential-difference operators \cite{Dun89}, given by
\begin{equation}
\label{DiN}
D_{i,N} = \frac{\partial}{\partial x_i}+\theta\sum_{j\neq i}\frac{1}{x_i-x_j}(1-\sigma_{ij})
\end{equation}
with the transposition $\sigma_{ij}$ acting on a function $f(x)$ in $x=(x_1,\ldots,x_N)$ by exchanging variables $x_i$ and $x_j$. Letting
$$
L_N^{(r)} = L_{p_r(x_1,\ldots,x_N)}\ \ (r\in\mathbb{N}),
$$
where $L_{N}^{(2)}=L_{x_1^2+\cdots+x_N^2}=L_N$, we note that the algebra of symmetric quantum integrals is (freely) generated by $L_N^{(1)},\ldots,L_N^{(N)}$.

The above important observation is due to Heckman \cite{Hec91}, who worked in the more general setting of an arbitrary root system, whereas \eqref{LN}--\eqref{DiN} are associated with $A_{N-1}$.

For further details on (rational) Calogero--Moser--Sutherland systems and their integrability, see e.g.~\cite{Eti07,OP83,Rui99}.

\subsection{Dunkl operator at infinity}
We proceed to recall Sergeev and Veselov's \cite{SV15} notion of a Dunkl operator in infinitely many variables as well as corresponding expressions for rational Calogero--Moser--Sutherland operators at infinity.

Introducing a generator $p_0$, replacing the dimension $N$, the algebra $\bar{\Lambda}:=\Lambda[p_0]$ and homomorphisms
$$
\varphi_N: \bar{\Lambda}\to \Lambda_N,\, p_r\mapsto x_1^r+\cdots+x_N^r\ (r\in\mathbb{Z}_{\geq 0}, N\in\mathbb{N})
$$
are considered. Using a further generator $x$, the infinite-dimensional Dunkl operator $D_\infty:\bar{\Lambda}[x]\to\bar{\Lambda}[x]$ is given by
$$
D_\infty = \partial+\theta\Delta,
$$
with the derivation $\partial$ in $\bar{\Lambda}[x]$ characterised by Leibniz rule and
$$
\partial(x) = 1,\ \ \partial(p_r) = rx^{r-1}\ \ (r\in \mathbb{Z}_{\geq 0}),
$$
and the operator $\Delta: \bar{\Lambda}[x]\to\bar{\Lambda}[x]$ defined by
$$
\Delta(x^rf) = \Delta(x^r)f,\ \ \Delta(1) = 0, \ \ \Delta(x^r) = \sum_{s=0}^{r-1}x^{r-1-s}p_s - rx^{r-1}\ \ (f\in\bar{\Lambda}, r\in\mathbb{Z}_{\geq 0}).
$$
The above definition is motivated by the fact that the homomorphism $\bar{\Lambda}[x]\to \Lambda_N[x_i]$ that maps $p_r\mapsto x_1^r+\cdots+x_N^r$ ($r\in\mathbb{Z}_{\geq 0}$) and $x\mapsto x_i$ ($i=1,\ldots,N$) intertwines $D_\infty$ and $D_{i,N}$.

Introducing also the linear `symmetrisation' operator $E:\bar{\Lambda}[x]\to\bar{\Lambda}$ by
$$
E(x^rf) = E(x^r)f,\ \ E(1) = 1,\ \ E(x^r) = p_r,\ \ (f\in\bar{\Lambda}, r\in\mathbb{Z}_{\geq 0}),
$$
rational Calogero--Moser--Sutherland integrals at infinity $L^{(r)}:\bar{\Lambda}\to\bar{\Lambda}$ ($r\in\mathbb{Z}_{\geq 0}$) are given by
\begin{equation}
\label{Lr}
L^{(r)} = \mathrm{Res}\, E\circ D_\infty^r,
\end{equation}
with $\mathrm{Res}$ denoting restriction to $\bar{\Lambda}$. Combining the intertwining relation between $D_\infty$ and $D_{i,N}$ with \eqref{Lp}, it is readily seen that the diagram 
\begin{equation}
\label{LNDiag}
\begin{CD}
\bar{\Lambda} @>L^{(r)}>> \bar{\Lambda}\\
@VV\varphi_NV @VV\varphi_NV\\
\Lambda_N @>L_N^{(r)}>> \Lambda_N
\end{CD}
\end{equation}
is commutative for all $r\in\mathbb{Z}_{\geq 0}$; and, as a straightforward consequence, it follows that $[L^{(r)},L^{(s)}]=0$ ($r,s\in\mathbb{Z}_{\geq 0}$).

\subsection{Deformed rational Calogero--Moser--Sutherland operators}
By setting
\begin{equation}
\label{xExt}
x_{n+i} = y_i\ \ (i=1,\ldots,m)
\end{equation}
and using the `parity' function
\begin{equation}
\label{parity}
p(i) :=
\left\{
\begin{array}{ll}
0, & i = 1,\ldots,n\\
1, & i = n+1,\ldots,n+m
\end{array}
\right.
\end{equation}
the rational limit of \eqref{cLnm} takes the simple and convenient form
\begin{multline}
\label{Lnm}
L_{n,m} := \sum_{i=1}^{n+m}(-\theta)^{p(i)}\frac{\partial^2}{\partial x_i^2}\\
- 2\sum_{1\leq i<j\leq n+m}\frac{(-\theta)^{1-p(i)-p(j)}}{x_i-x_j}\left((-\theta)^{p(i)}\frac{\partial}{\partial x_i}-(-\theta)^{p(j)}\frac{\partial}{\partial x_j}\right).
\end{multline}

We shall make use of a recursive formula for its quantum integrals and a connection with Calogero--Moser--Sutherland operators at infinity, both due to Sergeev and Veselov \cite{SV04,SV15}.

Specifically, taking $\partial_i^{(1)}=(-\theta)^{p(i)}\frac{\partial}{\partial x_i}$, differential operators of order $r>1$ are defined recursively by
\begin{equation}
\label{pari}
\partial_i^{(r)} = \partial_i^{(1)}\partial_i^{(r-1)}-\sum_{j\neq i}\frac{(-\theta)^{1-p(j)}}{x_i-x_j}\big(\partial_i^{(r-1)}-\partial_j^{(r-1)}\big)
\end{equation}
and the quantum integrals are given by
\begin{equation}
\label{Lrnm}
L^{(r)}_{n,m} = \sum_{i=1}^{n+m} (-\theta)^{-p(i)}\partial_i^{(r)}\ \ (r\in\mathbb{N}),
\end{equation}
with $L^{(2)}_{n,m}=L_{n,m}$.

Moreover, substituting $\mathbb{Z}_{\geq 0}$ for $\mathbb{N}$ in \eqref{varphinm} produces a homomorphism $\varphi_{n,m}:\bar{\Lambda}\to\Lambda_{n,m}$ mapping $p_0\mapsto\varphi_{n,m}(p_0):=n-m/\theta$. From Thm.~3.3 in \cite{SV15}, we recall that the diagram
\begin{equation}
\label{LnmDiag}
\begin{CD}
\bar{\Lambda} @>L^{(r)}>> \bar{\Lambda}\\
@VV\varphi_{n,m}V @VV\varphi_{n,m}V\\
\Lambda_{n,m} @>L_{n,m}^{(r)}>> \Lambda_{n,m}
\end{CD}
\end{equation}
is commutative for all $r\in\mathbb{N}$ and commutativity of the deformed Calogero--Moser--Sutherland operators \eqref{Lrnm}, i.e.
$$
\big[L_{n,m}^{(r)},L_{n,m}^{(s)}\big] = 0\ \ (r,s\in\mathbb{N}),
$$
follows from that of the operators \eqref{Lr}.

We proceed to describe a Harish-Chandra type isomorphism mapping the algebra of quantum integrals
\begin{equation}
\label{Qnm}
Q_{n,m} := \mathbb{C}\big[L_{n,m}^{(1)},L_{n,m}^{(2)},\ldots\big]
\end{equation}
of the deformed rational Calogero--Moser--Sutherland system onto $\Lambda_{n,m}$. That such an identification of $Q_{n,m}$ with $\Lambda_{n,m}$ is possible was first observed by Sergeev and Veselov \cite{SV04}.

We write $V_{n,m}$ for $\mathbb{C}^{n+m}$ equipped with the bilinear form
$$
B_{n,m}(u,v) := \sum_{i=1}^n u_iv_i - \theta\sum_{i=1}^m u_{n+i}v_{n+i}
$$
and $\mathscr{D}_{n,m}$ for the algebra of differential operators with constant (complex) coefficients generated by $\frac{\partial}{\partial x_i}$ ($i=1,\ldots,n$) and $\frac{\partial}{\partial y_i}$ ($i=1,\ldots,m$). Given $p\in P_{n,m}$, we let
$$
\partial(p) = p\left(\frac{\partial}{\partial x_1},\ldots,\frac{\partial}{\partial x_n},-\theta\frac{\partial}{\partial y_1},\ldots,-\theta\frac{\partial}{\partial y_m}\right).
$$
Identifying $P_{n,m}$ with $S(V_{n,m})$ and $\mathscr{D}_{n,m}$ with $S(V_{n,m}^*)$, the map $p\mapsto\partial(p)$ amounts to the isomorphism $S(V_{n,m})\to S(V_{n,m}^*)$ induced by $B_{n,m}$. (Here, we have used the notation $S(V)$ for the symmetric algebra of $V$.)

Letting $\mathscr{R}_{n,m}$ be the algebra of rational functions generated by $(x_i-x_j)^{-1}$ ($1\leq i<j\leq n$), $(x_i-y_j)^{-1}$ ($1\leq i\leq n$, $1\leq j\leq m$) and $(y_i-y_j)^{-1}$ ($1\leq i<j\leq m$), we introduce the algebra of differential operators $\mathscr{D}_{n,m}[\mathscr{R}_{n,m}]$, generated by the derivatives $\frac{\partial}{\partial x_i}$ and $\frac{\partial}{\partial y_i}$ with coefficients in $\mathscr{R}_{n,m}$, and note that $Q_{n,m}\subset\mathscr{D}_{n,m}[\mathscr{R}_{n,m}]$.

For $N\in\mathbb{N}$, we write $\mathfrak{N}(N,\mathbb{Z}_{\geq 0})$ for the set of strictly upper triangular $N\times N$ matrices with non-negative integer entries. Using again the convention $x_{n+i}=y_i$ ($i=1,\ldots,m$), we observe that any $L\in\mathscr{D}_{n,m}[\mathscr{R}_{n,m}]$ has a representation of the form
$$
L=\sum_{M\in \mathfrak{N}(n+m,\mathbb{Z}_{\geq 0})} \prod_{1\leq i<j\leq n+m}(x_i-x_j)^{-M_{ij}}\cdot \partial(\ell_M)
$$
for some $\ell_M\in P_{n,m}$. Hence, we can define a homomorphism $\psi_{n,m}:\mathscr{D}_{n,m}[\mathscr{R}_{n,m}]\to P_{n,m}$ by setting $\psi_{n,m}(L)=\ell_0$. From \eqref{pari}--\eqref{Lrnm}, it is readily inferred that
\begin{equation}
\label{psiAct}
\psi_{n,m}\big(L_{n,m}^{(r)}\big) = \sum_{i=1}^{n+m} (-\theta)^{-p(i)}x_i^r = p_{r,\theta}(x,y).
\end{equation}
Since the deformed power sums $p_{r,\theta}$ generate $\Lambda_{n,m}$, it follows that $\psi_{n,m}$ maps $Q_{n,m}$ onto $\Lambda_{n,m}$. In Section \ref{Sec:hypg3}, we shall provide a rather different description of this map and, as a consequence, infer injectivity.

\section{Generalised hypergeometric series}
\label{Sec:Hyperg}
Our main results rely on generalised hypergeometric series in two sequences of variables associated with either Jack polynomials, Jack symmetric functions or super-Jack polynomials.

\subsection{Associated with Jack polynomials}
In the case of Jack polynomials in $N$ variables $x=(x_1,\ldots,x_N)$, such series appeared, in particular, in Macdonald's widely circulated informal working paper \cite{Mac13}. They take a particularly simple form when expressed in terms of Kaneko's normalisation \cite{Kan93}
\begin{equation}
\label{Kan}
C_\lambda^{(\theta)}(x) = \frac{|\lambda|!}{\prod_{s\in\lambda}(a(s)+1+\theta l(s))} P_\lambda^{(\theta)}(x),
\end{equation}
also characterised by 
$$
\sum_{|\lambda|=k}C_\lambda^{(\theta)}(x) = p_1(x)^k\ \ \ (k\in\mathbb{Z}_{\geq 0}).
$$
Indeed, the simplest such series, with no additional parameters beyond $\theta$ and the only one we require, is given by
\begin{equation}
\label{FN}
F^{(\theta)}_N(x,y) = \sum_{d=0}^\infty F^{(\theta)}_{N,d}(x,y),\ \ \ F^{(\theta)}_{N,d}(x,y) = \sum_{|\lambda|=d} \frac{1}{|\lambda|!}\frac{C^{(\theta)}_\lambda(x)C^{(\theta)}_\lambda(y)}{C^{(\theta)}_\lambda(1^N)},
\end{equation}
where $F^{(\theta)}_{N,d}(x,y)$ is the homogeneous part of degree $d$ in both the $x_i$ and the $y_j$. (In \cite{Mac13}, this series is denoted ${}_0F_0(x,y;1/\theta)$). From \eqref{spec} and \eqref{Kan}, it is clear that the individual terms in these series are well-defined whenever $\theta$ is not a negative rational number or zero. Moreover, for $\theta>0$, the infinite series is known to converge locally uniformly on $\mathbb{C}^N\times\mathbb{C}^N$ and therefore define an entire function; see e.g.~Props.~3.10--11 in \cite{BF98} or Thm.~6.5 in \cite{BR23}.

As indicated in \cite{OO97} (see also \cite{BR23} for a more detailed explanation), the generalised hypergeometric series $F^{(\theta)}_N(x,y)$ is equal to the $A_{N-1}$ instance of Opdam's \cite{Opd93} multivariable Bessel functions associated with root systems. In particular, this equality manifests itself in the following joint eigenfunction property.

\begin{proposition}
\label{Prop:FN}
Assume that $\theta$ is not a negative rational number or zero. For each $p\in\Lambda_N$, we have
$$
L_{p,N}(x)F_N^{(\theta)}(x,y) = p(y)F_N^{(\theta)}(x,y).
$$
\end{proposition}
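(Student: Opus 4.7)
The plan is to reduce the stated identity to a reproducing-kernel property for Dunkl's bilinear form $[u, v]_\theta := (u(D) v)(0)$ on $\Lambda_N$, together with a standard adjointness relation for $p(D)$.

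Two ingredients from \cite{Dun91}, see also \cite{BF98, Ros98}, underpin the argument. First, when $u$ is symmetric the differential-difference operator $u(D)$ commutes with every $D_{i,N}$, so $(uv)(D) = u(D) v(D)$ as operators on polynomials, yielding the adjointness relation
$$
[uv, w]_\theta = [v, u(D) w]_\theta = [v, L_{u,N} w]_\theta \quad (u \in \Lambda_N, \ w \in \Lambda_N).
$$
Second, the Kaneko-normalised Jack polynomials are orthogonal with respect to $[\cdot, \cdot]_\theta$ with squared norms
$$
\bigl[C_\lambda^{(\theta)}, C_\lambda^{(\theta)}\bigr]_\theta = |\lambda|! \cdot C_\lambda^{(\theta)}(1^N),
$$
which are non-zero for $\theta$ outside the set \eqref{thetaValsPrel}.

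I would first verify that $F_N^{(\theta)}(\cdot, y)$ is the reproducing kernel: given $q \in \Lambda_N$, expand $q = \sum_\lambda q_\lambda C_\lambda^{(\theta)}$, pair with $F_N^{(\theta)}$ in the $x$-variable degree-by-degree (only finitely many homogeneous components of $F_N^{(\theta)}$ contribute, since $q(D)$ lowers degree), and apply orthogonality to obtain
$$
\bigl[q(x), F_N^{(\theta)}(x, y)\bigr]_\theta = \sum_\lambda q_\lambda C_\lambda^{(\theta)}(y) = q(y).
$$
Then, invoking adjointness with $u = p$ and $w = F_N^{(\theta)}(\cdot, y)$, for any test $q \in \Lambda_N$,
$$
\bigl[q(x), L_{p,N}(x) F_N^{(\theta)}(x, y)\bigr]_\theta = \bigl[p(x) q(x), F_N^{(\theta)}(x, y)\bigr]_\theta = p(y) q(y),
$$
while trivially $\bigl[q(x), p(y) F_N^{(\theta)}(x,y)\bigr]_\theta = p(y)\, q(y)$. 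Non-degeneracy of $[\cdot, \cdot]_\theta$ on each homogeneous piece $\Lambda_N^k$ (immediate from orthogonality with non-vanishing norms) then forces $L_{p,N}(x) F_N^{(\theta)}(x,y)$ and $p(y) F_N^{(\theta)}(x,y)$ to agree bidegree by bidegree as symmetric formal power series in $(x, y)$, hence also as entire functions by the convergence statement preceding the proposition.

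The main subtlety lies in setting up and citing the two preliminary facts and in justifying the infinite-series manipulations under the pairing. The latter is handled by noting that any $q \in \Lambda_N^k$ pairs non-trivially only with the terms of $F_N^{(\theta)}$ of $x$-degree at most $k$, reducing every computation to a finite sum. An alternative route would be to identify $F_N^{(\theta)}$ with Opdam's symmetric $A_{N-1}$ Bessel function (as noted in the paragraph preceding the proposition) and invoke Heckman--Opdam; but the form-based approach is self-contained and fits naturally with the bilinear-form framework developed in the remainder of the paper.
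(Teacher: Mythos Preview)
Your argument is correct, but it follows a genuinely different route from the paper's. The paper does not use Dunkl's bilinear form here at all: instead it invokes the \emph{non-symmetric} kernel $\mathscr{K}_A(x,y)$ from \cite{BF98}, together with its individual eigenfunction property $D_{i,N}(x)\mathscr{K}_A(x,y)=y_i\mathscr{K}_A(x,y)$ and the symmetrisation identity $\sum_{\sigma\in S_N}\mathscr{K}_A(x,\sigma y)=N!\,F_N^{(\theta)}(x,y)$. Applying $p(D_{1,N},\ldots,D_{N,N})$ to the symmetrised sum and using that $p$ is symmetric gives the result in three lines.

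Your approach, by contrast, stays entirely within the symmetric world: you take the Jack-orthogonality for $[\cdot,\cdot]_\theta$ with the explicit norm $|\lambda|!\,C_\lambda^{(\theta)}(1^N)$ as input, recognise $F_N^{(\theta)}$ as the reproducing kernel, and then finish by adjointness and non-degeneracy. This is precisely the $m=0$ shadow of the machinery the paper builds later in Sections~\ref{Sec:Form}--\ref{Sec:Orth} (compare Prop.~\ref{Prop:ReprKer} and Thm.~\ref{Thm:Orth}), so it fits the paper's philosophy well and avoids any appeal to non-symmetric objects. The cost is that you must import the orthogonality and norm formula as black boxes from \cite{Dun91,BF98,Ros98}, whereas the paper's route needs only the (arguably more elementary) eigenfunction and symmetrisation statements for $\mathscr{K}_A$. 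Both are legitimate; just be aware that in the paper's internal logic, Prop.~\ref{Prop:FN} is upstream of the reproducing-kernel and orthogonality results, so your version relies on those facts being available independently in the $m=0$ literature---which they are.
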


When $p=p_r$ with $r=1,2$, this result can be found in \cite{BF97}, whereas an eigenfunction property equivalent to the general case is sketched in \cite{OO97}. For arbitrary $p\in\Lambda_N$, a proof of the proposition is readily inferred from results by Baker and Forrester \cite{BF98} on a non-symmetric generalised hypergeometric series $\mathscr{K}_A(x,y)$, defined in Eq.~(3.17) in \cite{BF98}. More specifically, from its joint eigenfunction property
$$
D_{i,N}(x)\mathscr{K}_A(x,y) = y_i\mathscr{K}_A(x,y)\ \ \ (i = 1,\ldots,N)
$$
and symmetrisation property
$$
\sum_{\sigma\in S_N}\mathscr{K}_A(x,\sigma y) = N! F_N^{(\theta)}(x,y),
$$
obtained in Thm.~3.8(c) and Prop.~3.11 in \cite{BF98}, respectively, it follows that
\begin{equation*}
\begin{split}
L_p(x)F_N^{(\theta)}(x,y) &= p\big(D_{1,N}(x),\ldots,D_{N,N}(x)\big)F_N^{(\theta)}(x,y)\\
&= \frac{1}{N!}\sum_{\sigma\in S_N}p\big(D_{1,N}(x),\ldots,D_{N,N}(x)\big)\mathscr{K}_A(x,\sigma y)\\
&= \frac{1}{N!}\sum_{\sigma\in S_N}p(y_{\sigma^{-1}(1)},\ldots,y_{\sigma^{-1}(N)})\mathscr{K}_A(x,\sigma y)\\
&= p(y_1,\ldots,y_N)F_N^{(\theta)}(x,y).
\end{split}
\end{equation*}

In terms of homogeneous components, the result reads as follows.

\begin{corollary}
\label{Cor:FNd}
For all $d,k\in\mathbb{Z}_{\geq 0}$ and $p\in\Lambda_{N}^k$, we have
$$
L_p(x)F^{(\theta)}_{N,d}(x,y) = p(y)F^{(\theta)}_{N,d-k}(x,y),
$$
with $F^{(\theta)}_{N,d-k}\equiv 0$ when $d<k$ and the above assumptions on $\theta$ in place.
\end{corollary}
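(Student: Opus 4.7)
The plan is to derive this corollary directly from Proposition~\ref{Prop:FN} by matching bi-homogeneous components in $(x,y)$.

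First I would record two degree facts. On the one hand, each summand $F^{(\theta)}_{N,d}(x,y)$ in \eqref{FN} is bi-homogeneous of bi-degree $(d,d)$, since Kaneko's polynomial $C^{(\theta)}_\lambda$ is homogeneous of degree $|\lambda|$. On the other hand, the Dunkl operator $D_{i,N}$ in \eqref{DiN} is homogeneous of degree $-1$ in $x$: both $\partial/\partial x_i$ and $(x_i-x_j)^{-1}(1-\sigma_{ij})$ lower the total $x$-degree by one. Hence, for $p\in\Lambda_N^k$, the operator $L_{p,N}$ from \eqref{Lp} is homogeneous of degree $-k$ in $x$, while multiplication by $p(y)$ is homogeneous of degree $+k$ in $y$.

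Expanding both sides of the identity in Proposition~\ref{Prop:FN} via \eqref{FN} then gives
\begin{equation*}
\sum_{d\geq 0} L_{p,N}(x)F^{(\theta)}_{N,d}(x,y) \;=\; \sum_{e\geq 0} p(y)F^{(\theta)}_{N,e}(x,y),
\end{equation*}
where the $d$-th summand on the left is a polynomial of bi-degree $(d-k,d)$ and the $e$-th summand on the right has bi-degree $(e,e+k)$. Since the identity descends to one of formal power series in $(x,y)$, each side of which admits a unique bi-homogeneous decomposition, I would conclude by extracting the component of bi-degree $(d-k,d)$ from both sides. This leaves precisely $L_{p,N}(x)F^{(\theta)}_{N,d}(x,y)$ on the left and $p(y)F^{(\theta)}_{N,d-k}(x,y)$ on the right, with the convention $F^{(\theta)}_{N,d-k}\equiv 0$ when $d<k$, yielding the claimed identity. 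There is no substantive obstacle; the only point to monitor is the bi-homogeneity bookkeeping, which pairs these two terms uniquely.
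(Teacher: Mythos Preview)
Your proposal is correct and follows exactly the approach the paper intends: the paper introduces the corollary with the single sentence ``In terms of homogeneous components, the result reads as follows,'' and your argument simply spells out the bi-homogeneity bookkeeping that makes this extraction of components from Proposition~\ref{Prop:FN} precise.
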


\subsection{Associated with Jack symmetric functions}
In the context of symmetric functions, generalised hypergeometric series were introduced in \cite{DH12}. For our purposes, it suffices to consider the simplest instance, involving only the parameters $\theta$ and $p_0$, and recall its homogeneous components:
\begin{equation}
\label{Finf}
F^{(\theta,p_0)}_d(x,y) = \sum_{|\lambda|=d} \frac{1}{|\lambda|!}\frac{C^{(\theta)}_\lambda(x)C^{(\theta)}_\lambda(y)}{\epsilon_{p_0}\big(C^{(\theta)}_\lambda\big)}\ \ \ (d\in\mathbb{Z}_{\geq 0}),
\end{equation}
where, as before, $\epsilon_{p_0}$ denotes the homomorphism $\Lambda\to\mathbb{C}[p_0]$ given by $p_r\mapsto p_0$ ($r\in\mathbb{N}$). Just as in the previous section, $\theta$ not being a negative rational number or zero ensures that this series is well-defined.

The following infinite-dimensional generalisation of Cor.~\ref{Cor:FNd} is now readily identified and proved.

\begin{proposition}
\label{Prop:LrFd}
Take $\theta\in\mathbb{C}$ not of the form \eqref{thetaValsPrel}. For all $r,d\in\mathbb{Z}_{\geq 0}$, we have
\begin{equation}
\label{FEq}
L^{(r)}(x)F^{(\theta,p_0)}_{d}(x,y) = p_r(y)F^{(\theta,p_0)}_{d-r}(x,y),
\end{equation}
with $F^{(\theta,p_0)}_{d-r}\equiv 0$ when $d<r$.
\end{proposition}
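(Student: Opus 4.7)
The plan is to reduce \eqref{FEq} to its finite-variable counterpart, Corollary \ref{Cor:FNd}, by pulling back through the homomorphisms $\varphi_N$ and the commutative diagram \eqref{LNDiag}, and then to extend the resulting equality to arbitrary $p_0$ by a density argument. I first observe that both sides of \eqref{FEq} live in the finite-dimensional space $\Lambda_x^{d-r}\otimes\Lambda_y^d$ tensored with $\mathbb{C}(p_0)$: the numerator of $F_d^{(\theta,p_0)}$ has bidegree $(d,d)$, its denominator is a polynomial in $p_0$ alone, and the operator $L^{(r)}$ lowers $x$-degree by $r$ and is $\mathbb{C}[p_0]$-linear (evident from the construction \eqref{Lr} via $D_\infty$ and $E$ together with the convention $\partial(p_0)=0$), while multiplication by $p_r(y)$ raises the $y$-degree by $r$.

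Fix an integer $N\geq d$. By the specialisation formula \eqref{spec} together with the standing restrictions \eqref{thetaValsPrel} on $\theta$, the value $\epsilon_{p_0}(C_\lambda^{(\theta)})|_{p_0=N}=C_\lambda^{(\theta)}(1^N)$ is nonzero for every partition $\lambda$ with $|\lambda|\leq d$. Applying $\varphi_N\otimes\varphi_N$ term by term therefore identifies
\[
(\varphi_N\otimes\varphi_N)F_d^{(\theta,p_0)}=F_{N,d}^{(\theta)}(x_1,\ldots,x_N,y_1,\ldots,y_N),
\]
in the notation of \eqref{FN}, while the commutative diagram \eqref{LNDiag} lets me replace $L^{(r)}(x)$ by $L_N^{(r)}(x)$ after applying $\varphi_N$. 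Invoking Corollary \ref{Cor:FNd} with $p=p_r\in\Lambda_N^r$, I then obtain
\[
L_N^{(r)}(x)F_{N,d}^{(\theta)}(x,y)=p_r(y_1,\ldots,y_N)F_{N,d-r}^{(\theta)}(x,y),
\]
which is precisely the image of the proposed identity under $\varphi_N\otimes\varphi_N$.

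To lift this back to $\bar{\Lambda}_x\otimes\bar{\Lambda}_y$, I use the well-known fact that for $N\geq k$ the restriction $\varphi_N:\Lambda^k\to\Lambda_N^k$ is injective. Hence, for every $N\geq d$, the difference between the two sides of \eqref{FEq} maps to zero in $\Lambda_N(x)\otimes\Lambda_N(y)$. Viewing this difference as a $\mathbb{C}(p_0)$-valued element of the finite-dimensional space $\Lambda_x^{d-r}\otimes\Lambda_y^d$, its coefficients are rational in $p_0$ and vanish at every integer $N\geq d$; they therefore vanish identically, yielding \eqref{FEq}.

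The main obstacle I anticipate is essentially bookkeeping: one must ensure that the specialisation $p_0\mapsto N$ commutes with $L^{(r)}$ and does not introduce spurious poles in $F_d^{(\theta,p_0)}$ for any $\lambda$ contributing to the sum. Both points follow from the structural description of $L^{(r)}$ and the Jack specialisation formula recalled in Section \ref{Sec:Prel} under the hypothesis \eqref{thetaValsPrel}; once they are in place the argument is a direct transfer from the finite to the infinite setting.
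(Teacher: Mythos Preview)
Your argument is correct and follows essentially the same route as the paper's proof: reduce to finite $N$ via the commutative diagram \eqref{LNDiag}, invoke Corollary~\ref{Cor:FNd}, and conclude by observing that the difference has coefficients rational in $p_0$ which vanish at all integers $N\geq d$. The paper phrases the lifting step via algebraic independence of $\varphi_N(p_1),\ldots,\varphi_N(p_d)$ rather than injectivity of $\varphi_N$ on $\Lambda^k$, and explicitly flags the minor issue that $F_d^{(\theta,p_0)}\notin\bar{\Lambda}$ (to be handled by clearing denominators or extending $\varphi_N$ to elements without a pole at $p_0=N$), but these are cosmetic differences.
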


\begin{proof}
Since $L^{(r)}$ preserves $\bar{\Lambda}$ and lowers the degree by $r$, it is clear that the left-hand side of \eqref{FEq} is identically zero whenever $r>d$. Hence, fixing $r,d\in\mathbb{Z}_{\geq 0}$ such that $r\leq d$, we consider the symmetric function
$$
f(x,y) := L^{(r)}(x)F^{(\theta,p_0)}_{d}(x,y) - p_r(y)F^{(\theta,p_0)}_{d-r}(x,y),
$$
which amounts to a polynomial in $p_r(x), p_r(y)$, $1\leq r\leq d$, with coefficients depending rationally on $p_0$.

At this point, we choose $N\geq d$. We note that \eqref{spec} and $l^\prime(s)=i-1<\ell(\lambda)\leq|\lambda|$, where $s=(i,j)\in\lambda$, ensures that $f(x,y)$ has no pole at $p_0=N$. From \eqref{LNDiag} and Cor.~\ref{Cor:FNd}, we can thus infer that $\varphi_N^{(x)}\varphi_N^{(y)}f(x,y)=0$. (Of course $F^{(\theta,p_0)}_d(x,y)\notin\bar{\Lambda}$, but this minor technical snag is easily resolved by clearing denominators or, as discussed in Section 2.4 in \cite{DH12}, extending the definition of $\varphi_N$ to all elements in $\mathbb{C}(p_0)\otimes\Lambda$ that lack a pole at $p_0=N$.)  By algebraic independence of the $\varphi_N(p_r)$, $1\leq r\leq d$, it follows that each coefficient of $f(x,y)$ must vanish at $p_0=N$. Since this is the case for all $N\geq d$, the coefficients vanish identically and $f(x,y)\equiv 0$.
\end{proof}

\subsection{Associated with super-Jack polynomials}
\label{Sec:hypg3}
In this section, we work with sequences of $n+m$ variables $x=(x_1,\ldots,x_n)$ and $y=(y_1,\ldots,y_m)$ as well as $z=(z_1,\ldots,z_n)$ and $w=(w_1,\ldots,w_m)$. Introducing the renormalised super-Jack polynomials
\begin{equation}
\label{SC}
SC^{(\theta)}_\lambda(x,y) := \frac{|\lambda|!}{\prod_{s\in\lambda}(a(s)+1+\theta l(s))} SP_\lambda^{(\theta)}(x,y),
\end{equation}
we perform the substitutions $p_0\mapsto n-m/\theta$, $C^{(\theta)}_\lambda(x)\mapsto SC^{(\theta)}_\lambda(x,y)$ and $C^{(\theta)}_\lambda(y)\mapsto SC^{(\theta)}_\lambda(z,w)$ in \eqref{Finf} to obtain
\begin{equation}
\label{SFd}
SF^{(\theta)}_{n,m;d}(x,y;z,w) := \sum_{\substack{\lambda\in H_{n,m}\\ |\lambda|=d}} \frac{1}{|\lambda|!}\frac{SC^{(\theta)}_\lambda(x,y)SC^{(\theta)}_\lambda(z,w)}{SC^{(\theta)}_\lambda(1^{n+m})}\ \ \ (d\in\mathbb{Z}_{\geq 0}).
\end{equation}

Before proceeding further, a few remarks are in order: First, the above substitutions are essentially given by the homomorphism $\varphi_{n,m}$ (cf.~the remark in the proof of Prop.~\ref{Prop:LrFd}); second, summation over $d\in\mathbb{Z}_{\geq 0}$ yields
\begin{equation}
\label{SF}
\begin{split}
SF^{(\theta)}_{n,m}(x,y;z,w) &:= \sum_{d=0}^\infty SF^{(\theta)}_{n,m;d}(x,y;z,w)\\
&= \sum_{\lambda\in H_{n,m}}\frac{1}{|\lambda|!}\frac{SC^{(\theta)}_\lambda(x,y)SC^{(\theta)}_\lambda(z,w)}{SC^{(\theta)}_\lambda(1^{n+m})},
\end{split}
\end{equation}
which is identical with ${}_0\mathscr{SF}_0(x,y;z,w)$ in Section 6.3 of \cite{DH12}; and, third, to ensure that each $SF^{(\theta)}_{n,m;d}(x,y;z,w)$ is well-defined, we need to avoid the parameter values
\begin{equation}
\label{thetaVals}
\theta = \frac{i}{j},\ \ \ i\in-\mathbb{Z}_{\geq 0},\, j\in\mathbb{N}\ \text{or}\ 1\leq i\leq m,\, 1\leq j\leq n,
\end{equation}
since we might encounter a pole of a coefficient in $SC^{(\theta)}_\lambda$ when $\theta$ is a negative rational number and $SC^{(\theta)}_\lambda(1^{n+m})$ could vanish when $\theta$ takes one of the finitely many positive rational values specified above.

The methods used in \cite{BF98,BR23} to study convergence properties of $F_N^{(\theta)}$ do not directly apply to $SF^{(\theta)}_{n,m}$. However, using an approach from Desrosiers and Liu \cite{DL15}, it is possible to establish convergence for generic $\theta>0$. A precise statement and proof can be found in Appendix \ref{App:Conv}. For the remaining parameter values, we offer two interpretations: Either view $SF^{(\theta)}_{n,m}$ as a formal power series or consider each homogeneous component separately, so that convergence is not an issue.

By combining Prop.~\ref{Prop:LrFd} with \eqref{LnmDiag}, we establish
\begin{equation*}
L^{(r)}_{n,m}(x,y)SF^{(\theta)}_{n,m;d}(x,y;z,w) = p_{r,\theta}(z,w)SF^{(\theta)}_{n,m;d-r}(x,y;z,w)\ \ (r,d\in\mathbb{Z}_{\geq 0}),
\end{equation*}
where $SF^{(\theta)}_{d-r}\equiv 0$ if $d<r$. As a direct consequence of \eqref{Qnm}, \eqref{psiAct} and \eqref{SF}, we thus obtain the following lemma.

\begin{lemma}
\label{Lemma:SFnm}
For $\theta\in\mathbb{C}$ not of the form \eqref{thetaVals} and $L\in Q_{n,m}$, we have
$$
L(x,y)SF^{(\theta)}_{n,m}(x,y;z,w) = \psi_{n,m}(L)(z,w)SF^{(\theta)}_{n,m}(x,y;z,w).
$$
\end{lemma}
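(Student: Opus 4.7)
The lemma is essentially immediate given the displayed identity
$$
L^{(r)}_{n,m}(x,y)SF^{(\theta)}_{n,m;d}(x,y;z,w) = p_{r,\theta}(z,w)SF^{(\theta)}_{n,m;d-r}(x,y;z,w)
$$
recorded just before its statement. My plan is first to upgrade this degree-graded identity to one involving the full series $SF^{(\theta)}_{n,m}$, and then to promote it from the generators $L^{(r)}_{n,m}$ of $Q_{n,m}$ to the entire algebra via the homomorphism property of $\psi_{n,m}$.

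For the first step, I would sum the displayed identity over $d \in \mathbb{Z}_{\geq 0}$ and invoke the definition \eqref{SF} to obtain
$$
L^{(r)}_{n,m}(x,y)SF^{(\theta)}_{n,m}(x,y;z,w) = p_{r,\theta}(z,w)SF^{(\theta)}_{n,m}(x,y;z,w),
$$
valid for every $r \in \mathbb{N}$, to be read degree by degree in the homogeneous expansion for those admissible $\theta$ outside the range of genuine convergence (the one technical point worth flagging, already addressed in the paragraph preceding the lemma). By \eqref{psiAct} the prefactor $p_{r,\theta}(z,w)$ is exactly $\psi_{n,m}(L^{(r)}_{n,m})(z,w)$, so the claim of the lemma holds on every generator.

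For the second step, by \eqref{Qnm} any $L \in Q_{n,m}$ is a polynomial in the $L^{(r)}_{n,m}$. The key observation is that multiplication by $p_{r,\theta}(z,w)$ commutes with every $L^{(s)}_{n,m}(x,y)$, since they act on disjoint sets of variables. Iterating the generator identity, any monomial $L^{(r_1)}_{n,m} \cdots L^{(r_k)}_{n,m}$ acts on $SF^{(\theta)}_{n,m}$ as multiplication by $p_{r_1,\theta}(z,w) \cdots p_{r_k,\theta}(z,w)$, and since $\psi_{n,m}$ is an algebra homomorphism this product equals $\psi_{n,m}\bigl(L^{(r_1)}_{n,m} \cdots L^{(r_k)}_{n,m}\bigr)(z,w)$. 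The general statement then follows by $\mathbb{C}$-linearity. No genuine obstacle arises; the substantive work has already been carried out in Prop.~\ref{Prop:LrFd} and in the derivation of the homogeneous identity via the commutative diagram \eqref{LnmDiag}.
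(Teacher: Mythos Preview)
Your proposal is correct and matches the paper's own proof: the paper simply states that the lemma is a direct consequence of \eqref{Qnm}, \eqref{psiAct} and \eqref{SF} applied to the displayed graded identity, which is precisely your two-step argument of summing over $d$ and then extending from the generators $L_{n,m}^{(r)}$ to all of $Q_{n,m}$ via the homomorphism $\psi_{n,m}$. If anything, you have spelled out more detail than the paper does.
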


Using this joint eigenfunction property, it is straightforward to verify that the Harish-Chandra homomorphism $\psi_{n,m}$ maps $Q_{n,m}$ {\em one-to-one} onto $\Lambda_{n,m}$. To this end, let $L\in Q_{n,m}$ be such that $\psi_{n,m}(L)=0$. Since the $SP_\lambda^{(\theta)}$ ($\lambda\in H_{n,m}$) span $\Lambda_{n,m}$, it follows that $L$ vanishes on $\Lambda_{n,m}$. As long as the Vandermonde polynomial
$$
V_{n+m}(x,y) := \prod_{1\leq i<j\leq n}(x_i-x_j)\cdot \prod_{1\leq i<j\leq m}(y_i-y_j)\cdot \prod_{i=1}^n\prod_{i=1}^m(x_i-y_j)\neq 0,
$$
we can use $f_r:=p_{r,\theta}-p_{r,\theta}(x,y)$ with $r=1,\ldots,n+m$ as coordinate functions (centered) at $(x,y)\in\mathbb{C}^n\times\mathbb{C}^m$. (This is easily seen by computing the Jacobian determinant $\partial(p_{1,\theta},\ldots,p_{n+m,\theta})/\partial(x_1,\ldots,x_n,y_1,\ldots,y_m)$.) Then, we can write
\begin{equation}
\label{LExp}
L = \sum c_\alpha \partial_{f_1}^{\alpha_1}\cdots\partial_{f_{n+m}}^{\alpha_{n+m}},
\end{equation}
where the coefficient functions $c_\alpha\neq 0$ only for a finite subset of multi-indices $\alpha=(\alpha_1,\ldots,\alpha_{n+m})\in\mathbb{Z}_{\geq 0}^{n+m}$. Now, assume that $L$ is a non-trivial differential operator. Then, there exists $\alpha\in\mathbb{Z}_{\geq 0}^{n+m}$ of minimal weight $|\alpha|=\alpha_1+\cdots+\alpha_{n+m}$ such that $c_\alpha\neq 0$. However, since $L$ amounts to the zero operator on $\Lambda_{n,m}$, we have
$$
0 = L f_1^{\alpha_1}\cdots f_{n+m}^{\alpha_{n+m}} = c_\alpha \alpha!,
$$
which contradicts the above assumption.

We can thus conclude that $\psi_{n,m}:Q_{n,m}\to\Lambda_{n,m}$ is an isomorphism, write
\begin{equation}
L_p = \psi_{n,m}^{-1}(p)\ \ (p\in\Lambda_{n,m})
\end{equation}
and reformulate Lemma \ref{Lemma:SFnm} as the following 'deformed' analogue of Prop.~\ref{Prop:FN}.

\begin{proposition}
\label{Prop:SFnm}
For each $p\in\Lambda_{n,m}$, we have
$$
L_p(x,y)SF^{(\theta)}_{n,m}(x,y;z,w) = p(z,w)SF^{(\theta)}_{n,m}(x,y;z,w),
$$
as long as the $\theta$-values \eqref{thetaVals} are avoided.
\end{proposition}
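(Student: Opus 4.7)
The proposition is essentially an immediate reformulation of Lemma \ref{Lemma:SFnm} via the isomorphism $\psi_{n,m}$ established in the discussion just above the proposition. So my plan is very short and consists of two observations.

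First, I would recall that the preceding argument shows $\psi_{n,m}: Q_{n,m}\to\Lambda_{n,m}$ is an algebra isomorphism: surjectivity was obtained earlier from \eqref{psiAct} (the images of $L^{(r)}_{n,m}$ generate $\Lambda_{n,m}$), while injectivity was just verified by expanding a hypothetical element of the kernel in the coordinates $f_r=p_{r,\theta}-p_{r,\theta}(x,y)$ at a point where $V_{n+m}(x,y)\neq 0$ and using the joint eigenfunction property to derive a contradiction with minimality of $|\alpha|$ in \eqref{LExp}. Hence, given any $p\in\Lambda_{n,m}$, the operator $L_p:=\psi_{n,m}^{-1}(p)$ is a well-defined element of $Q_{n,m}$ satisfying $\psi_{n,m}(L_p)=p$.

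Second, I would apply Lemma \ref{Lemma:SFnm} with $L=L_p$. Since $L_p\in Q_{n,m}$ and the $\theta$-values \eqref{thetaVals} are excluded, the lemma gives
$$
L_p(x,y)SF^{(\theta)}_{n,m}(x,y;z,w) = \psi_{n,m}(L_p)(z,w)\, SF^{(\theta)}_{n,m}(x,y;z,w) = p(z,w)\, SF^{(\theta)}_{n,m}(x,y;z,w),
$$
which is exactly the claimed identity.

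Since all the real work has been done earlier, there is no substantial obstacle; the proposition is a cosmetic restatement of Lemma \ref{Lemma:SFnm} expressed in terms of the symbol $p\in\Lambda_{n,m}$ rather than the operator $L\in Q_{n,m}$. If anything required care, it would be merely ensuring that the bijectivity argument preceding the proposition is logically complete, but that has already been carried out in the text.
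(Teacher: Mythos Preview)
Your proposal is correct and matches the paper's approach exactly: the paper also establishes that $\psi_{n,m}$ is an isomorphism, defines $L_p=\psi_{n,m}^{-1}(p)$, and then explicitly states that Proposition~\ref{Prop:SFnm} is obtained by ``reformulat[ing] Lemma~\ref{Lemma:SFnm}'' in these terms. There is no separate proof in the paper beyond this observation.
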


Just as in the Jack polynomial case (cf.~Cor.~\ref{Cor:FNd}), the corresponding result for the homogenous components $SF_{n,m;d}^{(\theta)}$ immediately follows.

\begin{corollary}
\label{Cor:SFnmd}
For all $d,k\in\mathbb{Z}_{\geq 0}$ and $p\in\Lambda_{n,m}^k$, we have
$$
L_p(x,y)SF^{(\theta)}_{n,m;d}(x,y;z,w) = p(z,w)F^{(\theta)}_{n,m;d-k}(x,y;z,w),
$$
with $SF^{(\theta)}_{n,m;d-k}\equiv 0$ when $d<k$ and $\theta$ not of the form \eqref{thetaVals}.
\end{corollary}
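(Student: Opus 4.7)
The plan is to deduce Corollary \ref{Cor:SFnmd} directly from Proposition \ref{Prop:SFnm} by extracting bi-homogeneous components in the two sets of variables $(x,y)$ and $(z,w)$. This is exactly the pattern used to pass from Proposition \ref{Prop:FN} to Corollary \ref{Cor:FNd} in the undeformed Jack case, so the aim is simply to check that the same mechanism goes through in the deformed setting.

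First, I would note that by construction \eqref{SFd} the polynomial $SF^{(\theta)}_{n,m;d}(x,y;z,w)$ is bi-homogeneous of degree $d$ in $(x,y)$ and of degree $d$ in $(z,w)$, since every $SC^{(\theta)}_\lambda$ with $|\lambda|=d$ is homogeneous of degree $d$. Next, I would verify that for $p\in\Lambda_{n,m}^k$ the operator $L_p$ is homogeneous of degree $-k$, i.e.\ it lowers total $(x,y)$-degree by exactly $k$. Since $Q_{n,m}$ is generated by the $L_{n,m}^{(r)}$ and $\psi_{n,m}\bigl(L_{n,m}^{(r)}\bigr)=p_{r,\theta}$ has degree $r$, the relation $\psi_{n,m}(L_p)=p\in\Lambda_{n,m}^k$ forces $L_p$ to be a linear combination of monomials $L_{n,m}^{(1)\alpha_1}L_{n,m}^{(2)\alpha_2}\cdots$ with weighted degree $\sum r\alpha_r=k$. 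The required homogeneity then reduces to the claim that each individual $L_{n,m}^{(r)}$ is homogeneous of degree $-r$, which is clear by induction on $r$ from \eqref{pari}--\eqref{Lrnm}: the base case $\partial_i^{(1)}=(-\theta)^{p(i)}\partial/\partial x_i$ has degree $-1$, and at each step the divided difference $(x_i-x_j)^{-1}\bigl(\partial_i^{(r-1)}-\partial_j^{(r-1)}\bigr)$ together with $\partial_i^{(1)}\partial_i^{(r-1)}$ raise the degree of $\partial_i^{(r-1)}$ by exactly $-1$.

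With these two observations in hand, I would apply $L_p(x,y)$ to the identity of Proposition \ref{Prop:SFnm}, expand both sides via the bigraded decomposition $SF^{(\theta)}_{n,m}=\sum_{d\geq 0}SF^{(\theta)}_{n,m;d}$, and compare bi-homogeneous parts of type $(d-k,d)$. On the left, only $L_p(x,y)SF^{(\theta)}_{n,m;d}(x,y;z,w)$ contributes to that bidegree (with the convention that it vanishes for $d<k$), while on the right only $p(z,w)SF^{(\theta)}_{n,m;d-k}(x,y;z,w)$ contributes, since $p(z,w)$ raises the $(z,w)$-degree by $k$ and preserves the $(x,y)$-degree. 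Equating the two yields the stated identity. The interpretation of $SF^{(\theta)}_{n,m}$ as a formal power series in mixed degrees, valid for all admissible $\theta$, is sufficient here since the matching of bi-homogeneous components is purely algebraic.

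There is essentially no substantive obstacle: the only step that requires thought is the homogeneity of $L_{n,m}^{(r)}$, and this is an immediate induction on the recursion \eqref{pari}. Everything else is bookkeeping of degrees.
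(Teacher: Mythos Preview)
Your proposal is correct and takes essentially the same approach as the paper, which simply remarks that the corollary follows from Proposition~\ref{Prop:SFnm} by the same pattern used to pass from Proposition~\ref{Prop:FN} to Corollary~\ref{Cor:FNd}, namely by reading off bi-homogeneous components. Your additional verification that $L_p$ is homogeneous of degree $-k$ for $p\in\Lambda_{n,m}^k$, via the recursion \eqref{pari}--\eqref{Lrnm} and the fact that $\psi_{n,m}$ is a graded algebra isomorphism, makes explicit a point the paper leaves implicit but is otherwise the same argument.
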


\section{The bilinear form}
\label{Sec:Form}

We are now ready to introduce the relevant bilinear form on $\Lambda_{n,m}$.

\begin{definition}
\label{Def:Form}
Assuming that $\theta\in\mathbb{C}$ is not a negative rational number or zero, we define a (complex) bilinear form on $\Lambda_{n,m}$ by
$$
(p,q)_{n,m} = (L_pq)(0)\ \ (p,q\in\Lambda_{n,m}).
$$
\end{definition}

Writing $L^*$ for the adjoint of $L\in Q_{n,m}$, we proceed to formulate and prove some basic properties of $(\cdot,\cdot)_{n,m}$.

\begin{proposition}
\label{Prop:Form}
Excluding the values of $\theta$ given in \eqref{thetaVals}, we have
\begin{enumerate}
\item $(p,q)_{n,m}=L_p(x,y)L_q(z,w)SF^{(\theta)}_{n,m}(x,y;z,w)\arrowvert_{x=y=z=w=0}$,
\item $(p,q)_{n,m}=0$ whenever $p,q\in\Lambda_{n,m}$ are homogenous of different degrees,
\item $(\cdot,\cdot)_{n,m}$ is symmetric,
\item $L_p^*=p$.
\end{enumerate}
\end{proposition}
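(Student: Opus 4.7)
The plan is to establish (1) first and then derive the remaining parts from it. Two features of the generalised hypergeometric series $SF^{(\theta)}_{n,m}$ are central: it is manifestly symmetric under the exchange $(x,y)\leftrightarrow(z,w)$, and $SF^{(\theta)}_{n,m}(x,y;0,0)=1$ since $SC^{(\theta)}_\lambda(0,0)=0$ for $|\lambda|>0$ while the $\lambda=\emptyset$ term contributes $1$. To prove (1), I would apply Proposition~\ref{Prop:SFnm} in the $(z,w)$-variables via the aforementioned symmetry to obtain $L_q(z,w)SF^{(\theta)}_{n,m}(x,y;z,w)=q(x,y)SF^{(\theta)}_{n,m}(x,y;z,w)$; then, because $L_p(x,y)$ commutes with evaluation at $z=w=0$, that evaluation collapses $SF^{(\theta)}_{n,m}$ to $1$, and a final application of $L_p(x,y)$ followed by evaluation at $x=y=0$ produces $(L_pq)(0)=(p,q)_{n,m}$.

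Parts (2) and (3) then follow quickly. For (2), the relation $\psi_{n,m}(L^{(r)}_{n,m})=p_{r,\theta}$ identifies the grading of $Q_{n,m}$ by differential order with the grading of $\Lambda_{n,m}$ by polynomial degree, so $L_p$ for homogeneous $p$ of degree $k$ lowers polynomial degree by exactly $k$; consequently, when $\deg p\neq\deg q$ the polynomial $L_pq$ is either zero or homogeneous of strictly positive degree and hence vanishes at the origin. For (3), I would combine (1) with the $(x,y)\leftrightarrow(z,w)$ symmetry of $SF^{(\theta)}_{n,m}$: a relabelling of dummy variables in $L_p(x,y)L_q(z,w)SF^{(\theta)}_{n,m}(x,y;z,w)|_0$ swaps the roles of $p$ and $q$, giving $(p,q)_{n,m}=(q,p)_{n,m}$.

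For (4), the strategy is first to establish the multiplicativity $L_pL_r=L_{pr}$ as operators on $\Lambda_{n,m}$. Two applications of Proposition~\ref{Prop:SFnm} give
\begin{equation*}
L_p(x,y)L_r(x,y)SF^{(\theta)}_{n,m}=L_p(x,y)\bigl(r(z,w)SF^{(\theta)}_{n,m}\bigr)=p(z,w)r(z,w)SF^{(\theta)}_{n,m}=L_{pr}(x,y)SF^{(\theta)}_{n,m},
\end{equation*}
and, expanding $SF^{(\theta)}_{n,m}$ in the super-Jack basis and using linear independence of $\{SC^{(\theta)}_\lambda(z,w)\}_{\lambda\in H_{n,m}}$ to equate coefficients, one concludes $L_pL_rSC^{(\theta)}_\lambda=L_{pr}SC^{(\theta)}_\lambda$ for each $\lambda$, hence $L_pL_r=L_{pr}$ on $\Lambda_{n,m}$ by linearity. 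The adjoint identity then falls out by chaining:
\begin{equation*}
(L_pq,r)_{n,m}=(r,L_pq)_{n,m}=(L_rL_pq)(0)=(L_{pr}q)(0)=(pr,q)_{n,m}=(q,pr)_{n,m},
\end{equation*}
using (3) in the first and last equalities.

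The step I anticipate to be the main obstacle is this multiplicativity $L_pL_r=L_{pr}$: while one might hope to read it off from an algebra structure on $\psi_{n,m}$, this is not transparent from the pole-free-symbol definition, and a direct argument through principal symbols of composed differential operators would require additional care. The reproducing-kernel approach above sidesteps that subtlety by comparing operators only through their action on $\Lambda_{n,m}$.
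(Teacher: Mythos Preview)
Your proof is correct and follows essentially the same route as the paper. The only notable difference concerns part~(4): the paper writes $(s,L_pq)_{n,m}=(L_sL_pq)(0)=(L_{sp}q)(0)=(ps,q)_{n,m}$ in one line, taking the multiplicativity $L_sL_p=L_{sp}$ for granted, whereas you re-derive it via the reproducing kernel. Your concern that this multiplicativity ``is not transparent from the pole-free-symbol definition'' is unwarranted in context: the paper explicitly introduces $\psi_{n,m}$ as an algebra homomorphism (the constant-coefficient part of a product in $\mathscr{D}_{n,m}[\mathscr{R}_{n,m}]$ is the product of the constant-coefficient parts), and then proves it is bijective onto $\Lambda_{n,m}$ when restricted to $Q_{n,m}$, so $p\mapsto L_p=\psi_{n,m}^{-1}(p)$ is automatically a homomorphism. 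Your kernel argument is a valid alternative, but it is a detour rather than a necessary step.
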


\begin{proof}
\begin{enumerate}
\item By degree considerations, it is readily seen that the polynomial $L_p(x,y)L_q(z,w)SF^{(\theta)}_{n,m;d}(x,y;z,w)$ vanishes at $x=y=z=w=0$ unless $\deg p=\deg q=d$, in which case Cor.~\ref{Cor:SFnmd} entails
\begin{equation*}
\begin{split}
L_p(x,y)L_q(z,w)SF^{(\theta)}_{n,m;d}(x,y;z,w) &= L_p(x,y)q(x,y)SF^{(\theta)}_{n,m;0}\\
&= (L_pq)(0)SF^{(\theta)}_{n,m;0}
\end{split}
\end{equation*}
and, since $SF^{(\theta)}_{n,m;0}=1$, the claim follows.

\item Follows immediately from the definition of $(\cdot,\cdot)_{n,m}$ and the fact that $L_p$ is homogenous of degree $-\deg p$.

\item
Clear from \eqref{SF} and Property (1).

\item By definition, we have
$$
(s,L_pq)_{n,m} = (L_sL_pq)(0) = (L_{sp}q)(0) = (ps,q)_{n,m}
$$
for all $p,q,s\in\Lambda_{n,m}$.
\end{enumerate}
\end{proof}

As is clear from (1) in Prop.~\ref{Prop:Form}, there is close connection between the generalised hypergeometric series $SF^{(\theta)}_{n,m}$ and the bilinear form $(\cdot,\cdot)_{n,m}$. This connection if further clarified in the following proposition, which identifies $SF^{(\theta)}_{n,m}$ as the reproducing kernel of $(\cdot,\cdot)_{n,m}$.

\begin{proposition}
\label{Prop:ReprKer}
For each $p\in\Lambda_{n,m}$, we have
\begin{equation}
\label{reprEq}
\big(p(x,y),SF^{(\theta)}_{n,m}(x,y;z,w)\big)_{n,m} = p(z,w),
\end{equation}
where we assume that $\theta$ is not of the form \eqref{thetaVals}.
\end{proposition}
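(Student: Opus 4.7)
The plan is to combine the joint eigenfunction property of $SF^{(\theta)}_{n,m}$ from Corollary~\ref{Cor:SFnmd} with the degree-grading of the pairing. By bilinearity, it suffices to verify \eqref{reprEq} for $p\in\Lambda_{n,m}^k$ homogeneous of degree $k$. Interpreting the left-hand side degree-wise via
$$
\big(p(x,y),SF^{(\theta)}_{n,m}(x,y;z,w)\big)_{n,m} = \sum_{d\geq 0}\big(p(x,y),SF^{(\theta)}_{n,m;d}(x,y;z,w)\big)_{n,m}
$$
makes good sense because Property~(2) of Proposition~\ref{Prop:Form} forces every summand with $d\neq k$ to vanish.

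For the single surviving term, with $d=k$, Corollary~\ref{Cor:SFnmd} gives
$$
L_p(x,y)\,SF^{(\theta)}_{n,m;k}(x,y;z,w) = p(z,w)\,SF^{(\theta)}_{n,m;0}(x,y;z,w),
$$
and inspection of \eqref{SFd} shows that only the empty partition contributes to the degree-zero component, so that $SF^{(\theta)}_{n,m;0}\equiv 1$. Evaluating at $x=y=0$ as prescribed by Definition~\ref{Def:Form} then yields $\big(p,SF^{(\theta)}_{n,m;k}(\cdot;z,w)\big)_{n,m}=p(z,w)$; summing over the homogeneous components of a general $p$ recovers \eqref{reprEq}.

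I foresee no substantive obstacle; the argument is a direct composition of the eigenfunction property on homogeneous pieces and the triviality of the constant term of the kernel at the origin. The only minor bookkeeping point is to make sense of pairing a polynomial against the (possibly only formal) series $SF^{(\theta)}_{n,m}$, which is handled transparently by the orthogonality of different homogeneous degrees guaranteed by Proposition~\ref{Prop:Form}(2).
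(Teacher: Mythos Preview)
Your proof is correct and essentially coincides with the paper's own argument: the paper applies Definition~\ref{Def:Form} and Proposition~\ref{Prop:SFnm} directly to $SF^{(\theta)}_{n,m}$ and then evaluates at the origin using $SF^{(\theta)}_{n,m}(0;z,w)=1$, whereas you unpack this into homogeneous components via Corollary~\ref{Cor:SFnmd} and Proposition~\ref{Prop:Form}(2). The only difference is cosmetic---your explicit degree-by-degree treatment makes the handling of the (possibly formal) series $SF^{(\theta)}_{n,m}$ transparent, while the paper compresses this into a one-line appeal to Proposition~\ref{Prop:SFnm}.
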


\begin{proof}
The result is an immediate consequence of Def.~\ref{Def:Form}, Prop.~\ref{Prop:SFnm} and the observation that
$$
SF^{(\theta)}_{n,m}(0^n,0^m;z,w) = SF^{(\theta)}_{n,m}(x,y;0^n,0^m) = SF^{(\theta)}_{n,m}(0^n,0^m;0^n,0^m) = 1,
$$
which, in turn, is clear from \eqref{SF}.
\end{proof}

\begin{corollary}
Assuming that $\theta$ is not of the form \eqref{thetaVals}, the bilinear form $(\cdot,\cdot)_{n,m}$ is nondegenerate.
\end{corollary}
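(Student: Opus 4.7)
The plan is to exploit the reproducing-kernel property from Proposition~\ref{Prop:ReprKer} together with the homogeneity property (2) of Proposition~\ref{Prop:Form}. Since $(p,q)_{n,m}$ vanishes unless $p$ and $q$ are of the same degree, the form decomposes as an orthogonal (in the trivial sense) direct sum over the finite-dimensional homogeneous subspaces $\Lambda_{n,m}^d$, and it suffices to show that the restriction to each $\Lambda_{n,m}^d$ is nondegenerate.

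Fix $d\in\mathbb{Z}_{\geq 0}$ and suppose $p\in\Lambda_{n,m}^d$ satisfies $(p,q)_{n,m}=0$ for all $q\in\Lambda_{n,m}$. By homogeneity, this is equivalent to $(p,q)_{n,m}=0$ for all $q\in\Lambda_{n,m}^d$. For each fixed $(z,w)\in\mathbb{C}^n\times\mathbb{C}^m$, the homogeneous component $SF^{(\theta)}_{n,m;d}(x,y;z,w)$ is an element of $\Lambda_{n,m}^d$ (as a polynomial in $(x,y)$), so we may take it as a test element. Applying the restriction of Proposition~\ref{Prop:ReprKer} to degree $d$ (using again that the form kills cross-degree terms, so only the $d$-th homogeneous component of the kernel contributes), we obtain
\[
p(z,w) = \bigl(p(x,y),SF^{(\theta)}_{n,m}(x,y;z,w)\bigr)_{n,m} = \bigl(p(x,y),SF^{(\theta)}_{n,m;d}(x,y;z,w)\bigr)_{n,m} = 0.
\]
Since this holds for every $(z,w)$, we conclude $p\equiv 0$, proving nondegeneracy in the first argument. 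Symmetry of $(\cdot,\cdot)_{n,m}$, item (3) of Proposition~\ref{Prop:Form}, then yields nondegeneracy in the second argument as well.

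I do not expect a serious obstacle: the only point requiring mild care is the legitimacy of plugging $SF^{(\theta)}_{n,m;d}(\,\cdot\,;z,w)$ into the bilinear form as a bona fide element of $\Lambda_{n,m}^d$, which is immediate because, at fixed $(z,w)$, this component is a finite $\mathbb{C}$-linear combination of the super-Jack polynomials $SP^{(\theta)}_\lambda$ with $|\lambda|=d$, $\lambda\in H_{n,m}$. The exclusion of the parameter values in \eqref{thetaVals} is exactly what is needed for $SC^{(\theta)}_\lambda$ and $SC^{(\theta)}_\lambda(1^{n+m})$ to be well-defined and nonzero, ensuring the kernel is at our disposal.
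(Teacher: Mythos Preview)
Your proof is correct and follows essentially the same approach as the paper, which treats the corollary as an immediate consequence of the reproducing-kernel property in Proposition~\ref{Prop:ReprKer}. Your added care in restricting to homogeneous components is a reasonable way to make rigorous the use of the formal series $SF^{(\theta)}_{n,m}$ as a test element.
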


We continue to make precise the integral representation \eqref{intRep} for the bilinear form $(\cdot,\cdot)_{n,m}$. First of all, we note that the operator $e^{-L_{n,m}/2}$ has a well defined action on $\Lambda_{n,m}$. Indeed, since $L_{n,m}$ is homogeneous of degree $-2$ on $\Lambda_{n,m}$, it is locally nilpotent, and so if $p\in\Lambda_{n,m}$ has degree $d\in\mathbb{Z}_{\geq 0}$, then
$$
e^{-L_{n,m}/2}p := \sum_{k=0}^{\lfloor d/2\rfloor} \frac{(-1)^k}{2^k k!}L_{n,m}^k p.
$$
Requiring that $\xi\in\mathbb{R}^n$ and $\eta\in\mathbb{R}^m$ satisfy
\begin{equation}
\label{xieta}
\xi_n>\cdots>\xi_1,\ \ \ \eta_m>\cdots>\eta_1,\ \ \ \xi_i\neq\eta_j\ \ (1\leq i\leq n, 1\leq j\leq m),
\end{equation}
so that we are working with $\mathrm{Im}(x_i-x_j)<0$ ($1\leq i<j\leq n$) and $\mathrm{Im}(y_i-y_j)<0$ ($1\leq i<j\leq m$), we fix the branch of $A_{n,m}(x,y)$ by taking the principal value of $z^\rho$ for $z\in\mathbb{C}^*$ and $\rho\in\mathbb{C}$. From Def.~\ref{Def:Form}, it is clear that $(1,1)_{n,m}=1$, which requires
$$
M_{n,m} = \int_{\mathbb{R}^n+i\xi}\int_{\mathbb{R}^m+i\eta} \frac{e^{-x^2/2+\theta^{-1}y^2/2}}{A_{n,m}(x,y)}dxdy.
$$
This generalised Macdonald--Mehta integral was computed in \cite{FHV13}. Specifically, taking $t_i\to t_i/\sqrt{-\rho}$ and setting $\rho=\theta$ in Eq.~(33), we infer from Prop.~6.1 that
\begin{equation}
\label{MnmExpr}
M_{n,m} = C_{n,m}
\prod_{i=1}^n\prod_{j=1}^m \frac{1}{j-i\theta}\cdot \prod_{i=1}^n \frac{\Gamma(1-\theta)}{\Gamma(1-i\theta)}\cdot \prod_{j=1}^m \frac{\Gamma(1-1/\theta)}{\Gamma(1-j/\theta)},
\end{equation}
where
\begin{equation}
\label{Cnm}
C_{n,m}=(2\pi)^{\frac{n+m}{2}}(-\theta)^{n/2+n(n-1)/(2\theta)}\exp\left(-i\pi\left(\frac{n(n-1)\theta}{2}+\frac{m(m-1)}{2\theta}\right)\right).
\end{equation}

For suitable values of $\theta$, we can now establish the validity of \eqref{intRep}.

\begin{proposition}
\label{Prop:intRep}
Assuming that $\mathrm{Re}\,\theta<0$ and $\theta$ is not a negative rational number, the integral representation \eqref{intRep}, with $M_{n,m}$ given by \eqref{MnmExpr}--\eqref{Cnm}, for the bilinear form $(\cdot,\cdot)_{n,m}$ holds true as long as $\xi\in\mathbb{R}^n$ and $\eta\in\mathbb{R}^m$ satisfy the restrictions in \eqref{xieta}.
\end{proposition}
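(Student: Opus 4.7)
The plan is to define $I(p, q)$ as the right-hand side of \eqref{intRep} and to show it coincides with $(\cdot, \cdot)_{n, m}$ on $\Lambda_{n, m}$ by proving both forms share the same reproducing kernel $SF^{(\theta)}_{n, m}$. Combined with bilinearity and nondegeneracy of $(\cdot, \cdot)_{n, m}$, this forces the two forms to agree.

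First I would verify absolute convergence of the integral. Under $\mathrm{Re}\,\theta < 0$ one also has $\mathrm{Re}(\theta^{-1}) < 0$, so the factor $e^{-x^2/2 + \theta^{-1} y^2/2}$ provides Gaussian decay along the shifted contours $\mathbb{R}^n + i\xi$ and $\mathbb{R}^m + i\eta$. The conditions \eqref{xieta} keep each pairwise difference bounded away from zero, pinning down the branch of $A_{n, m}(x, y)$ and controlling $|A_{n, m}(x, y)|^{-1}$; at infinity the polynomial growth from the factors $(x_i - y_j)^{-2}$ is dominated by the Gaussian. Since $e^{-L_{n, m}/2}$ is locally nilpotent on $\Lambda_{n, m}$, the prefactors $e^{-L_{n, m}/2} p$ and $e^{-L_{n, m}/2} q$ are polynomials. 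The special case $p = q = 1$ then yields $I(1, 1) = 1$ directly from \eqref{MnmExpr}, matching $(1, 1)_{n, m} = 1$.

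Next I would show that $SF^{(\theta)}_{n, m}(x, y; z, w)$ is the reproducing kernel of $I$ as well, i.e.\ $I(p(x, y), SF^{(\theta)}_{n, m}(x, y; z, w)) = p(z, w)$ for all $p \in \Lambda_{n, m}$. Applying Lemma \ref{Lemma:SFnm} to $L_{n, m} = L_{n, m}^{(2)}$, whose Harish-Chandra symbol is $\psi_{n, m}(L_{n, m}) = p_{2, \theta}(z, w)$, yields degree by degree
$$
e^{-L_{n, m}/2}_{(x, y)}\, SF^{(\theta)}_{n, m}(x, y; z, w) = e^{-p_{2, \theta}(z, w)/2}\, SF^{(\theta)}_{n, m}(x, y; z, w).
$$
Substituting this into $I(p, SF^{(\theta)}_{n, m}(\cdot\, ; z, w))$ reduces the claim to a Bessel-type transform identity, which can be attacked by expanding $e^{-L_{n, m}/2} p$ as a polynomial and using Lemma \ref{Lemma:SFnm} once more to trade differentiations in $(z, w)$ for multiplications by the deformed power sums $p_{r, \theta}(x, y)$, together with the generalized Macdonald--Mehta moment integrals from \cite{FHV13}.

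The main obstacle will be handling $SF^{(\theta)}_{n, m}$ rigorously under the integral sign: Appendix \ref{App:Conv} only secures convergence of the full series for generic $\theta > 0$, whereas the present regime is $\mathrm{Re}\,\theta < 0$. I would address this by verifying the reproducing-kernel identity homogeneous-component by homogeneous-component, using that each $SF^{(\theta)}_{n, m; d}$ is a well-defined polynomial for any admissible $\theta$, and that $e^{-L_{n, m}/2}$ acts on any polynomial as a finite sum; the infinite series is then recovered by degree-wise re-summation against the test polynomial $p$. A secondary technical check is that the formal manipulation producing $e^{-p_{2, \theta}(z, w)/2} SF^{(\theta)}_{n, m}$ respects this degree-wise structure, which it does because $L_{n, m}$ strictly lowers the $(x, y)$-degree of each homogeneous piece of $SF^{(\theta)}_{n, m}$.
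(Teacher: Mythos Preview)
Your approach is genuinely different from the paper's and, as written, has a real gap at the step you label ``Bessel-type transform identity''. After using Lemma~\ref{Lemma:SFnm} to rewrite $e^{-L_{n,m}/2}SF^{(\theta)}_{n,m}(\cdot;z,w)=e^{-p_{2,\theta}(z,w)/2}SF^{(\theta)}_{n,m}(\cdot;z,w)$, and then trading the polynomial factor $(e^{-L_{n,m}/2}p)(x,y)$ for a differential operator $L_{e^{-L_{n,m}/2}p}(z,w)$ acting on $SF^{(\theta)}_{n,m}$, you are left needing the Gaussian integral
\[
M_{n,m}^{-1}\int SF^{(\theta)}_{n,m}(x,y;z,w)\,\frac{e^{-p_{2,\theta}(x,y)/2}}{A_{n,m}(x,y)}\,dx\,dy = e^{p_{2,\theta}(z,w)/2}
\]
together with the operator identity $e^{-p_{2,\theta}/2}L_{e^{-L_{n,m}/2}p}\,e^{p_{2,\theta}/2}=p$. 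Neither is supplied: the reference \cite{FHV13} only evaluates the constant $M_{n,m}$, not the full family of super-Jack moments $\int SC^{(\theta)}_\lambda(x,y)\cdot\mathrm{weight}$ that the Gaussian integral above unpacks into; and the second identity is essentially equivalent in strength to the conjugation formula \eqref{hatLnmConj} that constitutes the core of the paper's own argument. So the proposal defers precisely the substantive computation rather than bypassing it.

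For comparison, the paper does not go through the reproducing kernel at all. It instead observes that $(\cdot,\cdot)_{n,m}$ is uniquely determined by $(1,1)_{n,m}=1$, symmetry, and the adjoint relation $(ps,q)_{n,m}=(s,L_pq)_{n,m}$, and then checks these three properties for the integral form directly. The first two are immediate; the third is obtained by computing the formal adjoint $\widehat{L}^{(r)}_{n,m}$ of $L^{(r)}_{n,m}$ with respect to the weight and proving the conjugation identity $e^{L_{n,m}/2}\widehat{L}^{(r)}_{n,m}e^{-L_{n,m}/2}=p_{r,\theta}$, which is established first for finite $N$ via Dunkl operators, lifted to $\bar{\Lambda}$, and then specialised to $\Lambda_{n,m}$. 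This route is self-contained and does not require any moment evaluations beyond $M_{n,m}$.
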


\begin{proof}
Under our assumptions on $\theta$ and $\xi,\eta$, it is clear that the integral in the right-hand side of \eqref{intRep} is convergent

We note that the bilinear form $(\cdot,\cdot)_{n,m}$ is uniquely determined by the following properties:
\begin{enumerate}
\item $(1,1)_{n,m}=1$,
\item $(p,q)_{n,m}=(q,p)_{n,m}$,
\item $(ps,q)_{n,m}=(s,L_pq)_{n,m}$,
\end{enumerate}
for arbitrary $p,q,s\in\Lambda_{n,m}$. Indeed, by linearity and Property (2), it suffices to consider homogenous $p,q\in\Lambda_{n,m}$ such that $\deg p\geq\deg q$, and, from Property (3), we get
$$
(p,q)_{n,m} = (1,L_pq)_{n,m}.
$$
If $\deg p>\deg q$, the right-hand side is clearly zero, and in the remaining case $\deg p=\deg q$, it follows from Property (1) that
$$
(1,L_pq)_{n,m}=(L_pq)(0)(1,1)_{n,m} = (L_pq)(0),
$$
(where the first equality is due to $\deg(L_pq)=0$). Hence, writing $(p,q)^\prime_{n,m}$ for the right-hand side of \eqref{intRep}, it suffices to establish Properties (1)-(3) for the resulting bilinear form $(\cdot,\cdot)^\prime_{n,m}$ on $\Lambda_{n,m}$.

Properties (1) and (2) are obvious.

To prove Property (3), we rewrite the weight function in terms of the convenient notation \eqref{xExt}--\eqref{parity}:
$$
e^{-\frac{1}{2}\sum_{i=1}^{n+m}(-\theta)^{-p(i)}x_i^2}\cdot \prod_{1\leq i<j\leq n+m}(x_i-x_j)^{-2(-\theta)^{1-p(i)-p(j)}}.
$$
Then, we use \eqref{pari}--\eqref{Lrnm} to verify, by direct computations, that the corresponding formal adjoint of $L^{(r)}_{n,m}$ is given by
\begin{equation*}
\widehat L^{(r)}_{n,m} = \sum_{i=1}^{n+m} (-\theta)^{-p(i)}\widehat\partial_i^{(r)},
\end{equation*}
with $\widehat\partial_i^{(1)} = x_i-\partial_i^{(1)}$ and
\begin{equation}
\label{hatPari}
\partial_i^{(r)} = \widehat\partial_i^{(1)}\widehat\partial_i^{(r-1)}+\sum_{j\neq i}\frac{(-\theta)^{1-p(j)}}{x_i-x_j}\big(\widehat\partial_i^{(r-1)}-\widehat\partial_j^{(r-1)}\big)
\end{equation}
for $r>1$.

Since $L_{p_{r,\theta}}=L^{(r)}_{n,m}$ and the deformed power sums $p_{r,\theta}(x,y)$ generate $\Lambda_{n,m}$, the desired Property (3) will follow once we prove that
\begin{equation}
\label{hatLnmConj}
e^{L_{n,m}/2}\widehat L^{(r)}_{n,m}e^{-L_{n,m}/2} = p_{r,\theta}(x,y).
\end{equation}
Following the approach in Section \ref{Sec:Hyperg}, we first establish such a conjugation formula for the operators
$$
\widehat L_{p,N} = \mathrm{Res}\big(p\big(x_1-D_{1,N},\ldots,x_N-D_{N,N}\big)\big)\ \ \ (p\in\Lambda_N)
$$
in $\Lambda_N$, then lift it to $\Lambda$ and finally restrict the result to $\Lambda_{n,m}$.

Using the commutation relations
$$
[D_{i,N},x_j] =
\left\{
\begin{array}{ll}
1+\theta\sum_{k\neq i}\sigma_{ik}, & j = i\\
-\theta\sigma_{ij}, & j\neq i
\end{array}
\right.
$$
a straightforward computation yields
$$
\left[\sum_{i=1}^N D_{i,N}^2,x_j\right] = 2D_{j,N},
$$
which entails
$$
e^{\frac{1}{2}\sum_{i=1}^ND_{i,N}^2}x_je^{-\frac{1}{2}\sum_{i=1}^ND_{i,N}^2} = e^{\frac{1}{2}\mathrm{ad}\left(\sum_{i=1}^ND_{i,N}^2\right)}x_j = x_j+D_{j,N},
$$
where we have used the standard formula $\mathrm{Ad}_{e^X} = e^{\mathrm{ad}_X}$. It follows that
\begin{multline}
\label{hatLpNConj}
e^{L_N/2}\widehat L_{p,N}e^{-L_N/2}\\
= \mathrm{Res}\left(e^{\frac{1}{2}\sum_{i=1}^ND_{i,N}^2}p\big(x_1-D_{1,N},\ldots,x_N-D_{N,N}\big)e^{-\frac{1}{2}\sum_{i=1}^ND_{i,N}^2}\right)\\
= p(x_1,\ldots,x_N).
\end{multline}
Consulting the proof of Thm.~2.2 in \cite{SV15}, it is readily seen how to lift the operators $\widehat L_N^{(r)}:=\widehat L_{p_r,N}$ ($r\in\mathbb{Z}_{\geq 0}$) to $\bar{\Lambda}$. Specifically, introducing the operators
$$
\widehat L^{(r)} := \mathrm{Res}\, E\circ (x-D_\infty)^r : \bar{\Lambda}\to \bar{\Lambda}\ \ \ (r\in\mathbb{N}),
$$
we have the commutative diagram
\begin{equation}
\label{hatLNDiag}
\begin{CD}
\bar{\Lambda} @>\widehat L^{(r)}>> \bar{\Lambda}\\
@V\varphi_NVV @VV\varphi_NV\\
\Lambda_N @>\widehat L_N^{(r)}>> \Lambda_N
\end{CD}
\end{equation}
for each $r\in\mathbb{N}$. Since $f\in\bar{\Lambda}$ satisfies $\varphi_N(f)=0$ for all $N\in\mathbb{N}$ if and only if $f\equiv 0$ (cf.~Lemma 2.3 in \cite{SV15}), we infer from \eqref{LNDiag} and \eqref{hatLpNConj}--\eqref{hatLNDiag} that
$$
e^{L^{(2)}/2}\widehat L^{(r)}e^{-L^{(2)}/2} = p_r\ \ (r\in\mathbb{Z}_{\geq 0}).
$$
Finally, by easily identifiable modifications of the discussion in Section 3 of \cite{SV15}, we find that the diagram
\begin{equation}
\label{LDiag}
\begin{CD}
\bar{\Lambda} @>\widehat L^{(r)}>> \bar{\Lambda}\\
@V\varphi_{n,m}VV @VV\varphi_{n,m}V\\
\Lambda_{n,m} @>\widehat L_{n,m}^{(r)}>> \Lambda_{n,m}
\end{CD}
\end{equation}
is commutative for all $r\in\mathbb{N}$, and consequently that \eqref{hatLnmConj} holds true.
\end{proof}

\begin{remark}
Note that the integral representation \eqref{intRep} is independent of the specific choice of parameter values. When altering $\xi,\eta$ such that a hyperplane $\xi_i=\eta_j$ is crossed, it would seem that we pick up a residue term and thus alter the representation. However, the quasi-invariance conditions \eqref{qInv} ensure that any such residue vanishes.
\end{remark}

\section{Orthogonality relations}
\label{Sec:Orth}
Having proved in Prop.~\ref{Prop:ReprKer} that $SF^{(\theta)}_{n,m}$ is the reproducing kernel of $(\cdot,\cdot)_{n,m}$, the desired orthogonality relations for the super-Jack polynomials are easily inferred from the definition in \eqref{SF} of $SF^{(\theta)}_{n,m}$.

More specifically, setting $p(x,y)=SC^{(\theta)}_\mu(x,y)$ in \eqref{reprEq}, substituting the latter series expansion in \eqref{SF} and comparing coefficients, we obtain
\begin{equation}
\label{SCNorm}
\big(SC^{(\theta)}_\mu,SC^{(\theta)}_\lambda\big)_{n,m} = \delta_{\mu\lambda}|\lambda|!SC_\lambda^{(\theta)}(1^{n+m})\ \ \ (\mu,\lambda\in H_{n,m}),
\end{equation}
where $\delta_{\lambda\mu}$ denotes the Kronecker delta. Keeping the definitions of the two homomorphisms $\epsilon_X$ \eqref{epsX} and $\varphi_{n,m}$ \eqref{varphinm} in mind, it becomes clear from \eqref{SP} that
$$
SP_\lambda^{(\theta)}(1^{n+m})=\epsilon_{n-m/\theta}\big(P_\lambda^{(\theta)}\big),
$$
where by $\epsilon_{n-m/\theta}$ we mean the homomorphism $\Lambda\to\mathbb{C}$ given by $\epsilon_X$ followed by evaluation at $X=n-m/\theta$. Substituting \eqref{SC} in \eqref{SCNorm}, we can now use \eqref{qNorms}--\eqref{spec} to rewrite the right-hand side of the resulting equation in terms of the generalised Pochhammer symbol
\begin{equation}
\label{Pochh}
(a)_\lambda^{(\theta)} = \prod_{i=1}^{\ell(\lambda)}(a-\theta(i-1))_{\lambda_i},
\end{equation}
with $(a)_m$ the ordinary Pochhammer symbol, and the inverse $b_{\lambda}^{(\theta)}$ of the quadratic norm of $P_\lambda^{(\theta)}$, thus arriving at the following result.

\begin{theorem}
\label{Thm:Orth}
As long as $\theta$ is not a negative rational number or zero, we have
$$
\big(SP^{(\theta)}_\mu,SP^{(\theta)}_\lambda\big)_{n,m} = \delta_{\mu\lambda}\frac{(\theta n-m)^{(\theta)}_\lambda}{b_\lambda^{(\theta)}}\ \ (\mu,\lambda\in H_{n,m}).
$$
\end{theorem}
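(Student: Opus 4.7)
The plan is to extract the orthogonality relations from the reproducing kernel property in Proposition \ref{Prop:ReprKer}. The idea is that once one knows $SF^{(\theta)}_{n,m}$ reproduces elements of $\Lambda_{n,m}$ under $(\cdot,\cdot)_{n,m}$, and one also has an explicit expansion of $SF^{(\theta)}_{n,m}$ in the basis $\{SC^{(\theta)}_\lambda\}_{\lambda\in H_{n,m}}$, the orthogonality of the $SC^{(\theta)}_\lambda$ drops out by comparing coefficients. Concretely, I would set $p = SC^{(\theta)}_\mu$ in \eqref{reprEq}, expand the kernel on the right using the series \eqref{SF}, and invoke bilinearity of $(\cdot,\cdot)_{n,m}$ in the first argument to obtain
\[
\sum_{\lambda\in H_{n,m}}\frac{1}{|\lambda|!}\frac{\big(SC^{(\theta)}_\mu(x,y),SC^{(\theta)}_\lambda(x,y)\big)_{n,m}}{SC^{(\theta)}_\lambda(1^{n+m})}SC^{(\theta)}_\lambda(z,w) = SC^{(\theta)}_\mu(z,w).
\]
Since the $SC^{(\theta)}_\lambda(z,w)$ form a basis of $\Lambda_{n,m}$ (as $\lambda$ ranges over $H_{n,m}$), matching coefficients yields \eqref{SCNorm}, that is $(SC^{(\theta)}_\mu,SC^{(\theta)}_\lambda)_{n,m}=\delta_{\mu\lambda}\,|\lambda|!\,SC^{(\theta)}_\lambda(1^{n+m})$.

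Next, I would translate this back into the normalisation $SP^{(\theta)}_\lambda$ using \eqref{SC}. Writing $H_\lambda=\prod_{s\in\lambda}(a(s)+1+\theta l(s))$, the relation is $SC^{(\theta)}_\lambda = (|\lambda|!/H_\lambda)SP^{(\theta)}_\lambda$, so bilinearity gives
\[
\big(SP^{(\theta)}_\mu,SP^{(\theta)}_\lambda\big)_{n,m} = \delta_{\mu\lambda}\,H_\lambda\,SP^{(\theta)}_\lambda(1^{n+m}).
\]
Because $SP^{(\theta)}_\lambda=\varphi_{n,m}(P^{(\theta)}_\lambda)$ and $\varphi_{n,m}(p_r)=p_{r,\theta}$ sends $p_r$ to a symmetric function whose evaluation at $x=1^n$, $y=1^m$ equals $n-m/\theta$, we have $SP^{(\theta)}_\lambda(1^{n+m})=\epsilon_{n-m/\theta}(P^{(\theta)}_\lambda)$, which is given by the Stanley specialisation formula \eqref{spec} with $X=n-m/\theta$.

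The final step is a routine combinatorial match. Using \eqref{spec} to write $\epsilon_{n-m/\theta}(P^{(\theta)}_\lambda) = \prod_{s\in\lambda}(\theta n - m + a'(s)-\theta l'(s))/(a(s)+\theta l(s)+\theta)$, and recalling $b^{(\theta)}_\lambda=\prod_{s\in\lambda}(a(s)+\theta l(s)+\theta)/(a(s)+1+\theta l(s))$ from \eqref{qNorms}, one finds
\[
H_\lambda\,\epsilon_{n-m/\theta}\big(P^{(\theta)}_\lambda\big) = \prod_{s\in\lambda}\frac{\theta n - m + a'(s)-\theta l'(s)}{1}\cdot\prod_{s\in\lambda}\frac{a(s)+1+\theta l(s)}{a(s)+\theta l(s)+\theta} = \frac{(\theta n -m)^{(\theta)}_\lambda}{b^{(\theta)}_\lambda},
\]
where the product representation of the generalised Pochhammer symbol \eqref{Pochh} as $\prod_{s=(i,j)\in\lambda}(a+a'(s)-\theta l'(s))$ with $a'(s)=j-1$, $l'(s)=i-1$ is what provides the identification.

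There is no real obstacle here; all heavy lifting was done in Sections \ref{Sec:Hyperg} and \ref{Sec:Form}, in particular the joint eigenfunction property of $SF^{(\theta)}_{n,m}$ (Prop.~\ref{Prop:SFnm}) and the reproducing kernel identity (Prop.~\ref{Prop:ReprKer}). The one point that deserves care is the bookkeeping in the last step: one must translate between the three natural quantities attached to $\lambda$ — the hook-type product $H_\lambda$, the specialisation $\epsilon_{n-m/\theta}(P^{(\theta)}_\lambda)$, and the pair $((\theta n-m)^{(\theta)}_\lambda,\,b^{(\theta)}_\lambda)$ — and verify the cell-by-cell cancellation, which is straightforward once the Pochhammer symbol is rewritten as a product over cells of $\lambda$.
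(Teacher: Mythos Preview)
Your proposal is correct and follows essentially the same route as the paper: set $p=SC^{(\theta)}_\mu$ in the reproducing kernel identity \eqref{reprEq}, expand $SF^{(\theta)}_{n,m}$ via \eqref{SF}, compare coefficients to obtain \eqref{SCNorm}, and then convert to the $SP$-normalisation using \eqref{SC}, \eqref{spec} and \eqref{qNorms} together with the cell-product form of \eqref{Pochh}. If anything, you spell out the final combinatorial identification in more detail than the paper does.
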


We conclude this section by sketching an alternative approach to this orthogonality result, starting from the bilinear form on $\Lambda$ given by
$$
(p,q)_{p_0} = \epsilon_0(L_pq),
$$
with the homomorphisms $p\mapsto L_p$ and $\epsilon_0:\Lambda\to\mathbb{C}$ characterised by $p_r\mapsto L^{(r)}$ and $\epsilon(p_r)=0$, respectively, for $r\in\mathbb{N}$, and where we think of $p_0$ as a complex parameter. Proceeding as above, it is readily seen that $F^{(\theta,p_0)}(x,y):=\sum_{d=0}^\infty F^{(\theta,p_0)}_d(x,y)$ is the reproducing kernel of $(\cdot,\cdot)_{p_0}$, which, in turn, implies
$$
(P_\mu,P_\lambda)_{p_0} = \delta_{\mu\lambda}\frac{(\theta p_0)^{(\theta)}_\lambda}{b_\lambda^{(\theta)}}.
$$

Setting $p_0=n-m/\theta$, we note that $(\theta p_0)^{(\theta)}_\lambda=(\theta n-m)^{(\theta)}_\lambda=0$ if and only if $(n+1,m+1)\in\lambda$ or equivalently $\lambda\notin H_{n,m}$. (To be precise, the `only if' part of this claim holds true as long as we avoid the $\theta$-values \eqref{thetaVals}.) In other words, the kernel of $(\cdot,\cdot)_{n-m/\theta}$ equals
$$
K_{n,m} := \mathrm{span}\Big\{P^{(\theta)}_\lambda\mid \lambda\notin H_{n,m}\Big\}.
$$
From \cite{SV05} (see Thm.~2), we recall that $K_{n,m}$ is also the kernel of $\varphi_{n,m}:\Lambda\to\Lambda_{n,m}$, so that $(\cdot,\cdot)_{n-m/\theta}$ descends to a non-degenerate bilinear form on the factor space $\Lambda/K_{n,m}\cong\Lambda_{n,m}$ that amounts to $(\cdot,\cdot)_{n,m}$.

\section{Lassalle--Nekrasov correspondence}
\label{Sec:LN}
In this section, we provide a new proof of the Lassalle--Nekrasov correspondence between the deformed trigonometric and rational harmonic Calogero--Moser--Sutherland systems; and, in addition, we show that the correspondence is isometric in a natural sense.

Using the notation \eqref{xExt}--\eqref{parity}, we recall from \cite{SV04,SV05} that if \eqref{pari} is modified such that $\partial^{(1)}_i=(-\theta)^{p(i)}x_i\partial/\partial x_i$ and
\begin{equation*}
\partial^{(r)}_i=\partial^{(1)}_i\partial^{(r-1)}_i-\frac{1}{2}\sum_{j\neq i}(-\theta)^{1-p(j)}\frac{x_i+x_j}{x_i-x_j}\big(\partial^{(r-1)}_i-\partial^{(r-1)}_j\big)
\end{equation*}
for $r>1$, the differential operators
\begin{equation}
\label{cLnmr}
\mathcal{L}_{n,m}^{(r)} = \sum_{i=1}^{n+m} (-\theta)^{-p(i)}\partial_i^{(r)}\ \ (r\in\mathbb{N})
\end{equation}
where $\mathcal{L}_{n,m}^{(2)}=\mathcal{L}_{n,m}$, pairwise commute and are simultaneously diagonalised by the super-Jack polynomials. As we shall see below, the Lassalle--Nekrasov correspondence implies that corresponding quantum integrals for the rational harmonic system are given by
\begin{multline}
\label{cLnmrH}
\mathscr{L}_{n,m}^{(r)} = \mathcal{L}_{n,m}^{(r)}+\frac{1}{2}\big[\mathcal{L}_{n,m}^{(r)},L_{n,m}\big]+\frac{1}{2^22!}\big[\big[\mathcal{L}_{n,m}^{(r)},L_{n,m}\big],L_{n,m}\big]\\
+\cdots+\frac{1}{2^rr!}\big[\cdots\big[\mathcal{L}_{n,m}^{(r)},L_{n,m}\big],\ldots,L_{n,m}\big]\ \ (r\in\mathbb{N}),
\end{multline}
where
$$
\mathscr{L}_{n,m}^{(1)} = \sum_{i=1}^{n+m}x_i\frac{\partial}{\partial x_i}-L_{n,m}.
$$

First, we deduce an alternative description of the map $e^{-L_{n,m}/2}:\Lambda_{n,m}\to\Lambda_{n,m}$. From Prop.~\ref{Prop:SFnm}, it is clear that
\begin{equation*}
\begin{split}
G_{n,m}^{(\theta)}(x,y;z,w) &:= SF_{n,m}^{(\theta)}(x,y;z,w)e^{-p_{2,\theta}(z,w)/2}\\
&= e^{-L_{n,m}(x,y)/2}SF_{n,m}^{(\theta)}(x,y;z,w),
\end{split}
\end{equation*}
and so \eqref{SH} and \eqref{SF} entail the generating function expansion
$$
G_{n,m}^{(\theta)}(x,y;z,w) = \sum_{\lambda\in H_{n,m}}\frac{b_\lambda(\theta)}{(\theta n-m)_\lambda^{(\theta)}}SH_{\lambda}^{(\theta)}(x,y)SP_{\lambda}^{(\theta)}(z,w),
$$
cf.~\eqref{qNorms}--\eqref{spec}, \eqref{SC} and \eqref{Pochh}. Hence, invoking Thm.~\ref{Thm:Orth}, we obtain the following proposition.

\begin{proposition}
\label{Prop:LNRep}
Assuming $\theta$ is not of the form \eqref{thetaVals}, we have
$$
e^{-L_{n,m}(x,y)/2}p(x,y) = \big(G_{n,m}^{(\theta)}(x,y;z,w),p(z,w)\big)_{n,m}
$$
for each $p\in\Lambda_{n,m}$.
\end{proposition}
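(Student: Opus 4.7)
My plan is to combine the generating function expansion of $G_{n,m}^{(\theta)}$ displayed just before the proposition with the orthogonality relations of Thm.~\ref{Thm:Orth}.

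By bilinearity in $p$, it suffices to verify the identity for $p = SP_\mu^{(\theta)}$ with $\mu \in H_{n,m}$, since these super-Jack polynomials span $\Lambda_{n,m}$. The idea is to substitute the expansion
$$
G_{n,m}^{(\theta)}(x,y;z,w) = \sum_{\lambda \in H_{n,m}}\frac{b_\lambda^{(\theta)}}{(\theta n-m)_\lambda^{(\theta)}} SH_\lambda^{(\theta)}(x,y) SP_\lambda^{(\theta)}(z,w)
$$
into the right-hand side of the proposition and, since the bilinear form acts in the $(z,w)$ variables, pull the $(x,y)$-dependent factors $SH_\lambda^{(\theta)}(x,y)$ out of the form. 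This yields
$$
\sum_{\lambda \in H_{n,m}}\frac{b_\lambda^{(\theta)}}{(\theta n-m)_\lambda^{(\theta)}} SH_\lambda^{(\theta)}(x,y) \big(SP_\lambda^{(\theta)}, SP_\mu^{(\theta)}\big)_{n,m}.
$$
Invoking Thm.~\ref{Thm:Orth} collapses this sum to the single $\lambda = \mu$ term and cancels the prefactor, leaving $SH_\mu^{(\theta)}(x,y)$. By the definition \eqref{SH}, this equals $e^{-L_{n,m}(x,y)/2} SP_\mu^{(\theta)}(x,y)$, which is exactly the left-hand side.

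The only technical subtlety I expect to address is the meaning of the bilinear form applied to the formal series $G_{n,m}^{(\theta)}$ in its first slot. Here Prop.~\ref{Prop:Form}(2) saves the day: since the form pairs only homogeneous components of equal degree and $SP_\mu^{(\theta)}(z,w)$ is homogeneous of degree $|\mu|$, only the degree-$|\mu|$ part in $(z,w)$ of $G_{n,m}^{(\theta)}$ contributes. The sum thus reduces to finitely many nonzero terms, making the termwise evaluation legitimate.
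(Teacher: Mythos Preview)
Your proposal is correct and follows essentially the same approach as the paper: both substitute the generating function expansion of $G_{n,m}^{(\theta)}$ displayed just before the proposition and then invoke the orthogonality relations of Thm.~\ref{Thm:Orth} to collapse the sum. Your added remark about only finitely many terms contributing (via Prop.~\ref{Prop:Form}(2)) makes explicit a point the paper leaves implicit.
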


For suitable $\theta$-values, we proceed to introduce an additional bilinear form on $\Lambda_{n,m}$, which can be viewed as a natural generalisation to the deformed case of the $L^2$ inner product over $\mathbb{R}^N$ with weight function $e^{-x^2/2}\cdot \prod_{1\leq i<j\leq N}|x_i-x_j|^{2\theta}$.

\begin{definition}
\label{Def:AltForm}
Assuming that $\theta\in\mathbb{C}$ is not a negative rational number, that it satisfies $\mathrm{Re}\,\theta<0$ and $\xi\in\mathbb{R}^n$, $\eta\in\mathbb{R}^m$ satisfy \eqref{xieta}, we define a bilinear form on $\Lambda_{n,m}$ by
$$
\{p,q\}_{n,m} = M_{n,m}^{-1} \int_{\mathbb{R}^n+i\xi}\int_{\mathbb{R}^m+i\eta} p(x,y)q(x,y)\frac{e^{-x^2/2+\theta^{-1}y^2/2}}{A_{n,m}(x,y)}dxdy\ \ (p,q\in\Lambda_{n,m}),
$$
where the value of $M_{n,m}$ is given by \eqref{MnmExpr}--\eqref{Cnm}.
\end{definition}

We are now ready to state and prove the main result of this section.

\begin{theorem}
\label{Thm:LN}
For $\theta$ not of the form \eqref{thetaVals}, the map $e^{-L_{n,m}/2}:\Lambda_{n,m}\to\Lambda_{n,m}$ intertwines the deformed trigonometric and rational harmonic Calogero--Moser--Sutherland operators given by \eqref{cLnmr} and \eqref{cLnmrH}, respectively. More precisely, the diagram
\begin{equation*}
\begin{CD}
\Lambda_{n,m} @>e^{-L_{n,m}/2}>> \Lambda_{n,m}\\
@V\mathcal{L}_{n,m}^{(r)}VV @VV\mathscr{L}_{n,m}^{(r)}V\\
\Lambda_{n,m} @>e^{-L_{n,m}/2}>> \Lambda_{n,m}
\end{CD}
\end{equation*}
is commutative for all $r\in\mathbb{N}$.

If, in addition, $\mathrm{Re}\,\theta<0$, we have
\begin{equation*}
\big\{e^{-L_{n,m}/2}p,e^{-L_{n,m}/2}q\big\}_{n,m} = (p,q)_{n,m}\ \ (p,q\in\Lambda_{n,m}).
\end{equation*}
\end{theorem}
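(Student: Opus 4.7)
The plan is to deduce both parts of the theorem from material already at hand: the intertwining is essentially the Hadamard lemma applied to the conjugation $e^{-L_{n,m}/2}\mathcal{L}_{n,m}^{(r)}e^{L_{n,m}/2}$, while the isometry follows by a direct comparison of the integral representation in Proposition~\ref{Prop:intRep} with Definition~\ref{Def:AltForm}.

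For the intertwining, I would set $A_0=\mathcal{L}_{n,m}^{(r)}$ and $A_k=[A_{k-1},L_{n,m}]$, and verify by induction, using $[X,Y]=-[Y,X]$, that $A_k=(-1)^k\mathrm{ad}_{L_{n,m}}^k\mathcal{L}_{n,m}^{(r)}$. The defining series \eqref{cLnmrH} then becomes
$$
\mathscr{L}_{n,m}^{(r)}=\sum_{k\geq 0}\frac{(-1)^k}{2^k k!}\,\mathrm{ad}_{L_{n,m}}^k\mathcal{L}_{n,m}^{(r)}=e^{-\mathrm{ad}_{L_{n,m}}/2}\mathcal{L}_{n,m}^{(r)}.
$$
Since $L_{n,m}$ is homogeneous of degree $-2$ on $\Lambda_{n,m}$ and $\mathcal{L}_{n,m}^{(r)}$ preserves polynomial degree, the operator $\mathrm{ad}_{L_{n,m}}^k\mathcal{L}_{n,m}^{(r)}$ lowers degree by $2k$; this guarantees both that $e^{-L_{n,m}/2}$ is well defined on $\Lambda_{n,m}$ (as already noted in Section~\ref{Sec:Form}) and that the series above effectively terminates on any fixed polynomial. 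Applying the standard identity $\mathrm{Ad}_{e^X}=e^{\mathrm{ad}_X}$ with $X=-L_{n,m}/2$, which is already invoked in the proof of Proposition~\ref{Prop:intRep}, yields
$$
\mathscr{L}_{n,m}^{(r)}=e^{-L_{n,m}/2}\mathcal{L}_{n,m}^{(r)}e^{L_{n,m}/2},
$$
and composing with $e^{-L_{n,m}/2}$ on the right produces the required commutative diagram.

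For the isometry, I would simply substitute $p\mapsto e^{-L_{n,m}/2}p$ and $q\mapsto e^{-L_{n,m}/2}q$ into the integral appearing on the right-hand side of \eqref{intRep}. Because $e^{-L_{n,m}/2}$ preserves $\Lambda_{n,m}$, the resulting integral is by Definition~\ref{Def:AltForm} precisely $\{e^{-L_{n,m}/2}p,e^{-L_{n,m}/2}q\}_{n,m}$, whereas the left-hand side of \eqref{intRep} equals $(p,q)_{n,m}$ by Proposition~\ref{Prop:intRep}. Both assertions thus reduce to earlier work: the genuine obstacle, already handled in Section~\ref{Sec:Form}, was establishing the integral representation itself; what remains here is a careful identification of operators and a bit of sign-bookkeeping in the Hadamard expansion.
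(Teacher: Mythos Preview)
Your treatment of the isometry is correct and matches the paper's: it follows immediately from Proposition~\ref{Prop:intRep} and Definition~\ref{Def:AltForm}.

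The intertwining argument, however, has a genuine gap. You equate the \emph{finite} sum \eqref{cLnmrH} with the full Hadamard series $\sum_{k\geq 0}\frac{(-1)^k}{2^k k!}\,\mathrm{ad}_{L_{n,m}}^k\mathcal{L}_{n,m}^{(r)}$, but this requires $\mathrm{ad}_{L_{n,m}}^k\mathcal{L}_{n,m}^{(r)}=0$ for every $k>r$ as an operator on $\Lambda_{n,m}$. Your degree argument shows only that the $k$-th term annihilates polynomials of degree below $2k$; on a polynomial of degree $d>2r$ the terms with $r<k\leq d/2$ are not ruled out. Commutation with the second-order operator $L_{n,m}$ does \emph{not} lower the order of a differential operator, so nothing in your reasoning forces truncation at $k=r$. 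Without that truncation, Hadamard yields only the tautology that $e^{-L_{n,m}/2}\mathcal{L}_{n,m}^{(r)}e^{L_{n,m}/2}$ intertwines, which says nothing about the specific operator defined by \eqref{cLnmrH}.

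The paper circumvents this by conjugating instead with the \emph{multiplication} operator $e^{p_{2,\theta}/2}$: here $\mathrm{ad}_{p_{2,\theta}}$ lowers the order of a differential operator by at least one, so the Hadamard series for $e^{p_{2,\theta}/2}\mathcal{L}_{n,m}^{(r)}e^{-p_{2,\theta}/2}$ terminates at $k=r$ as an identity of operators. It then transfers this to the $L_{n,m}$-picture through the kernel $G_{n,m}^{(\theta)}$: on $SF_{n,m}^{(\theta)}$ one may trade $p_{2,\theta}(z,w)$ for $L_{n,m}(x,y)$ (Proposition~\ref{Prop:SFnm}) and $\mathcal{L}_{n,m}^{(r)}(z,w)$ for $\mathcal{L}_{n,m}^{(r)}(x,y)$ (the joint eigenfunction property), after which Proposition~\ref{Prop:LNRep} and self-adjointness of $\mathcal{L}_{n,m}^{(r)}$ with respect to $(\cdot,\cdot)_{n,m}$ yield the intertwining for \eqref{cLnmrH}. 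That passage through the kernel is the substance missing from your proposal.
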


\begin{proof}
Using the formula $\mathrm{Ad}_{e^X} = e^{\mathrm{ad}_X}$, with $X$ the operator of multiplication by $p_{2,\theta}/2$, as well as the
fact that $\mathrm{ad}_X$ lowers the order of a differential operator by at least one, we deduce
\begin{multline*}
e^{p_{2,\theta}/2}\mathcal{L}_{n,m}^{(r)}e^{-p_{2,\theta}/2}
= \mathcal{L}_{n,m}^{(r)}+\frac{1}{2}\big[p_{2,\theta},\mathcal{L}_{n,m}^{(r)}\big]+\frac{1}{2^22!}\big[p_{2,\theta},\big[p_{2,\theta},\mathcal{L}_{n,m}^{(r)}\big]\big] \\
  + \cdots 
+ \frac{1}{2^rr!}\big[p_{2,\theta},\ldots,\big[p_{2,\theta},\mathcal{L}_{n,m}^{(r)}\big]\cdots\big].
\end{multline*}
As a direct consequence of the definition in \eqref{SF} of $SF_{n,m}^{(\theta)}$ and the fact that the super-Jack polynomials are joint eigenfunctions of the operators $\mathcal{L}_{n,m}^{(r)}$, we get
$$
\mathcal{L}_{n,m}^{(r)}(x,y)SF_{n,m}^{(\theta)}(x,y;z,w) = \mathcal{L}_{n,m}^{(r)}(z,w)SF_{n,m}^{(\theta)}(x,y;z,w).
$$
Combining the previous two formulae with Prop.~\ref{Prop:SFnm}, we infer
\begin{multline*}
\mathcal{L}_{n,m}^{(r)}(z,w) G_{n,m}^{(\theta)}(x,y;z,w)\\
= e^{-p_{2,\theta}(z,w)/2}\Big(e^{p_{2,\theta}/2}\mathcal{L}_{n,m}^{(r)}e^{-p_{2,\theta}/2}\Big)(z,w)SF_{n,m}^{(\theta)}(x,y;z,w)\\
= \mathscr{L}_{n,m}^{(r)}(x,y) G_{n,m}^{(\theta)}(x,y;z,w).
\end{multline*}
Since Thm.~\ref{Thm:Orth} and the pertinent joint eigenfunction property imply
$$
\big(\mathcal{L}_{n,m}^{(r)}p,q\big)_{n,m} = \big(p,\mathcal{L}_{n,m}^{(r)}q\big)_{n,m}\ \ (p,q\in\Lambda_{n,m}),
$$
it follows from Prop.~\ref{Prop:LNRep} and our reasoning thus far that
\begin{equation*}
\begin{split}
\big(\mathscr{L}_{n,m}^{(r)} e^{-L_{n,m}/2}\big)(p) &= \big(\mathscr{L}_{n,m}^{(r)}(x,y) G_{n,m}^{(\theta)}(x,y;z,w),p(z,w)\big)_{n,m}\\
&= \big(G_{n,m}^{(\theta)}(x,y;z,w),\mathcal{L}_{n,m}^{(r)}(z,w)p(z,w)\big)_{n,m}\\
&= (e^{-L_{n,m}/2}\mathcal{L}_{n,m}^{(r)})(p)
\end{split}
\end{equation*}
for all $r\in\mathbb{N}$ and $p\in\Lambda_{n,m}$.

Finally, if $\mathrm{Re}\,\theta<0$, it is clear from Prop.~\ref{Prop:intRep} and Def.~\ref{Def:AltForm} that the map $e^{-L_{n,m}/2}:\Lambda_{n,m}\to\Lambda_{n,m}$ becomes an isometry when the domain is equipped with the bilinear form $(\cdot,\cdot)_{n,m}$ and the codomain with $\{\cdot,\cdot\}_{n,m}$
\end{proof}

\begin{remark}
This is precisely the Lassalle--Nekrasov correspondence we had in mind, and, while the first part of the result should be compared with Thm.~6 in \cite{FHV21}, the above proof runs in parallel with that of Thm.~4 in \cite{FHV21}, which pertains to the ordinary undeformed case.
\end{remark}

\begin{remark}
As detailed in Thm.~8 in \cite{FHV21}, it is readily inferred that the deformed rational harmonic Calogero--Moser--System is integrable. This was first proved independently by Desrosiers and the author \cite{DH12} and Feigin \cite{Fei12}; see also Berest and Chalykh \cite{BC20}.
\end{remark}

\begin{corollary}
For $\theta\in\mathbb{C}$ satisfying $\mathrm{Re}\,\theta<0$ while not being equal to a negative rational number, we have
$$
\big\{SH^{(\theta)}_\mu,SH^{(\theta)}_\lambda\big\}_{n,m} = \delta_{\mu\lambda}\frac{(\theta n-m)^{(\theta)}_\lambda}{b_\lambda^{(\theta)}}\ \ (\mu,\lambda\in H_{n,m}).
$$
\end{corollary}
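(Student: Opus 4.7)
The plan is to obtain this directly by combining the isometry statement in Theorem \ref{Thm:LN} with the orthogonality relations for super-Jack polynomials established in Theorem \ref{Thm:Orth}. Under the stated hypotheses on $\theta$ (namely $\mathrm{Re}\,\theta < 0$ and $\theta$ not a negative rational number), both results are available: in particular, $\theta$ of the form \eqref{thetaVals} is automatically avoided, since the forbidden positive rationals $i/j$ with $1\leq i\leq m$, $1\leq j\leq n$ all have positive real part, and the negative rationals are excluded by assumption.

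More precisely, I would unfold the definition \eqref{SH}, $SH^{(\theta)}_\lambda = e^{-L_{n,m}/2}SP^{(\theta)}_\lambda$, and then apply the isometry identity
\begin{equation*}
\big\{e^{-L_{n,m}/2}p,e^{-L_{n,m}/2}q\big\}_{n,m} = (p,q)_{n,m}
\end{equation*}
from Theorem \ref{Thm:LN} with $p = SP^{(\theta)}_\mu$ and $q = SP^{(\theta)}_\lambda$. This reduces the left-hand side of the claimed identity to $\big(SP^{(\theta)}_\mu,SP^{(\theta)}_\lambda\big)_{n,m}$, which is evaluated by Theorem \ref{Thm:Orth} to give exactly $\delta_{\mu\lambda}(\theta n-m)^{(\theta)}_\lambda/b_\lambda^{(\theta)}$.

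There is essentially no obstacle: the corollary is a formal three-line consequence of the two theorems it sits under, and the only point worth checking is that the parameter restrictions for the two inputs are compatible with those imposed in the statement, which was verified above. Thus no new estimate, series identity, or structural computation is required beyond bookkeeping.
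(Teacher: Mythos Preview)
Your proposal is correct and matches the paper's own proof, which likewise combines the isometry statement from Theorem~\ref{Thm:LN} with the orthogonality relations of Theorem~\ref{Thm:Orth}. Your version is in fact slightly more precise than the paper's, since you correctly take $p=SP^{(\theta)}_\mu$, $q=SP^{(\theta)}_\lambda$ (the paper writes $p=SH^{(\theta)}_\mu$, $q=SH^{(\theta)}_\lambda$, which is a minor slip), and you also make the compatibility of the parameter restrictions explicit.
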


\begin{proof}
Taking $p=SH^{(\theta)}_\mu$ and $q=SH^{(\theta)}_\lambda$ in Thm.~\ref{Thm:LN} and invoking Thm.~\ref{Thm:Orth}, we immediately obtain the claim.
\end{proof}

\section{Outlook}
\label{Sec:Out}
In this paper, we have worked with deformed Calogero--Moser--Sutherland operators and corresponding eigenfunctions associated with deformations of root systems of type $A$. It seems plausible that our results can be generalised to the $BC$ case, with the super-Jack polynomials being replaced by the super-Jacobi polynomials introduced in \cite{SV09}. Indeed, the constructions and results from \cite{BF98,DH12,SV15} that we have relied on are available also in the $BC$ case.

We also note that Feigin's \cite{Fei12} approach to integrability of deformed Calogero--Moser--Sutherland operators, using special representations of rational Cherednik algebras, might provide a different way to establish the present results and point the way towards further generalisations; and the recent orthogonality results in \cite{AHL21} on super-Macdonald polynomials hint at generalisations to the difference case.

Due, in particular, to parameter-independent singularities of eigenfunctions, deformed Calogero--Moser--Sutherland systems were for a long time seen as problematic to interpret within (quantum) physics. However, in recent years, the deformed trigonometric and even elliptic systems have naturally appeared in a quantum field theory formulation of the ordinary systems \cite{AL17,BLL20} as well as in the context of super-symmetric gauge theories \cite{Nek17,CKL20}. It would be interesting to explore possible connections to the results presented in this paper, not least the Lassalle--Nekrasov correspondence between deformed trigonometric and rational harmonic systems.

It is also interesting to note that, in contrast to the situation in \cite{AHL19}, the bilinear forms introduced in Defs.~\ref{Def:Form} and \ref{Def:AltForm} are nondegenerate for generic $\theta$-values and it is not obvious whether a natural positive definite inner product can be extracted, e.g.~by restricting attention to particular parameter values and a subspace of $\Lambda_{n,m}$. Insights into this problem might provides clues on potential (physical) interpretations of the results obtained in this paper and vice versa.

\section*{Acknowledgemnents}
I would like to thank two anonymous referees for a number of helpful comments.

\begin{appendix}

\section{On convergence of generalised hypergeometric series}
\label{App:Conv}
In this appendix, we study convergence properties of the generalised hypergeometric series $SF^{(\theta)}_{n,m}(x,y;z,w)$ \eqref{SF}, restricting, for simplicity, attention to $\theta>0$. Specifically, we prove the proposition below by following the approach of Desrosiers and Liu \cite{DL15} (see Appendix B), who considered generalised hypergeometric series ${}_pSF_q^{(\alpha)}(a_1,\ldots,a_p;b_1,\ldots,b_q;x,y)$, depending on $p+q$ parameters in addition to $\alpha=1/\theta$, where the special case $p=q=0$ corresponds to the specialisation $(z,w)=(1^n,1^m)$ of $SF^{(\theta)}_{n,m}(x,y;z,w)$.

\begin{proposition}
Assume that $\theta>0$ is such that $\theta\neq i/j$ for any $1\leq i\leq m$, $1\leq j\leq n$. Then $SF^{(\theta)}_{n,m}(x,y;z,w)$ is analytic for all $(x,y),(z,w)\in\mathbb{C}^n\times\mathbb{C}^m$.
\end{proposition}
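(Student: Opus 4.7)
The plan is to adapt the convergence argument used by Desrosiers and Liu in Appendix B of \cite{DL15} to the present setting. Since each homogeneous component $SF^{(\theta)}_{n,m;d}$ is a polynomial, it suffices by Weierstrass' theorem to prove that the series $\sum_d SF^{(\theta)}_{n,m;d}(x,y;z,w)$ converges absolutely and uniformly on every polydisc $\{|x_i|,|y_j|,|z_i|,|w_j|\leq R\}$. Fixing such a polydisc, I would estimate each term in the series by majorising the super-Jack numerator and bounding the normalising denominator from below.

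The first step is to obtain an upper bound of the form
$$|SC^{(\theta)}_\lambda(x,y)|\leq c_\theta^{\,|\lambda|}\, C^{(\theta)}_\lambda(u_1,\ldots,u_N)$$
valid for every $\lambda$ with $\ell(\lambda)\leq N$, where $c_\theta>0$ depends on $R$ and $\theta$ and $u=(u_1,\ldots,u_N)\in\mathbb{R}^N_{\geq 0}$ is a fixed positive auxiliary point. Since $SC^{(\theta)}_\lambda=\varphi_{n,m}(C^{(\theta)}_\lambda)$ and $\varphi_{n,m}$ acts by $p_r\mapsto p_{r,\theta}(x,y)$, this reduces to the elementary estimate $|p_{r,\theta}(x,y)|\leq \tilde c_\theta^{\,r}\, p_r(u)$ combined with the power-sum expansion of $C^{(\theta)}_\lambda$. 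The nonnegativity of the monomial expansion coefficients of Jack polynomials (for $\theta>0$) is what enables one to compare arbitrary complex arguments with a positive auxiliary point. An identical bound applies to $SC^{(\theta)}_\lambda(z,w)$.

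The second step is a lower bound on the normalisation. By \eqref{spec} and \eqref{SC},
$$SC^{(\theta)}_\lambda(1^{n+m})=\frac{|\lambda|!}{\prod_{s\in\lambda}(a(s)+1+\theta l(s))}\prod_{s\in\lambda}\frac{\theta n-m+a'(s)-\theta l'(s)}{a(s)+\theta l(s)+\theta},$$
and I would group the second product by row and column contributions. The hypothesis $\theta\neq i/j$ for $1\leq i\leq m$, $1\leq j\leq n$ guarantees that none of the numerator factors $\theta n-m+a'(s)-\theta l'(s)$ vanish for $\lambda\in H_{n,m}$, and a direct combinatorial comparison then yields a lower bound of the form $|SC^{(\theta)}_\lambda(1^{n+m})|\geq c'_\theta\, C^{(\theta)}_\lambda(1^N)$ for a fixed $N\geq n+m$.

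Combining these bounds, the $\lambda$-term of the series is majorised by
$$\frac{c_\theta^{\,2|\lambda|}}{c'_\theta\,|\lambda|!}\cdot\frac{C^{(\theta)}_\lambda(u)\,C^{(\theta)}_\lambda(u)}{C^{(\theta)}_\lambda(1^N)},$$
whose sum over $\lambda$ is, up to rescaling, the value $F^{(\theta)}_N(c_\theta u,c_\theta u)$, which is finite by the local uniform convergence of $F^{(\theta)}_N$ on $\mathbb{C}^N\times\mathbb{C}^N$ for $\theta>0$ established in Prop.~3.11 of \cite{BF98} (or Thm.~6.5 in \cite{BR23}). The main obstacle is step one: because the deformed power sums $p_{r,\theta}$ involve sign cancellations, the clean monomial positivity of Jack polynomials does not transfer directly to super-Jack polynomials, and the majorisation has to be routed through an intermediate positive point in $\mathbb{R}^N$, with careful tracking of how the constants depend on $R$, $n$, $m$ and $\theta$.
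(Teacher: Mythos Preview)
Your proposal has two genuine gaps that the paper's argument avoids.

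\textbf{Step one does not go through as stated.} You want $|SC^{(\theta)}_\lambda(x,y)|\leq c_\theta^{|\lambda|}C^{(\theta)}_\lambda(u)$ by expanding in power sums and using $|p_{r,\theta}(x,y)|\leq \tilde c_\theta^{\,r}p_r(u)$. But the power-sum expansion coefficients $\chi^\lambda_\mu(\theta)$ of $P^{(\theta)}_\lambda$ are \emph{not} nonnegative (already for $\theta=1$ they are symmetric-group characters divided by $z_\mu$), so the triangle inequality only gives $\sum_\mu|\chi^\lambda_\mu|\,p_\mu(u)$, which is not controlled by $C^{(\theta)}_\lambda(u)$. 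Monomial positivity is irrelevant here: the homomorphism $\varphi_{n,m}$ is defined via power sums, and there is no clean way to route monomial symmetric functions through it. The paper handles exactly this sign problem by applying Cauchy--Schwarz in the form
\[
\big|SP_\lambda^{(\theta)}(x,y)\big|^2\leq \Big(\sum_\mu \chi^\lambda_\mu(\theta)^2\theta^{-\ell(\mu)}z_\mu\Big)\Big(\sum_\mu |p_{\mu,\theta}(x,y)|^2\theta^{\ell(\mu)}z_\mu^{-1}\Big),
\]
recognising the first factor as $\langle P_\lambda,P_\lambda\rangle=1/b_\lambda^{(\theta)}$ via \eqref{pOrth}, and bounding the second factor crudely. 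No positivity of the $\chi^\lambda_\mu$ is needed.

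\textbf{The comparison with $F^{(\theta)}_N$ for fixed $N$ cannot work.} Partitions $\lambda\in H_{n,m}$ have unbounded length (the column lengths below row $n$ are only constrained to be $\leq m$), so for any fixed $N$ there are infinitely many $\lambda\in H_{n,m}$ with $\ell(\lambda)>N$, for which $C^{(\theta)}_\lambda(1^N)=0$ and the lower bound in your step two is vacuous. The paper instead splits $\lambda$ into the rectangle $\lambda\cap(m^n)$ and the ``east'' and ``south'' pieces $e(\lambda),s(\lambda)$; the rectangle contributes a uniformly bounded-below constant, while the east and south pieces contribute generalised Pochhammer symbols that are bounded below by $e(\lambda)!$ and $s(\lambda)!$ times powers of $\min\{1,\theta\}$. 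The resulting majorant is an explicit exponential-type series in $||(x,y)||_\infty\cdot||(z,w)||_\infty$, not a value of $F^{(\theta)}_N$.
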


\begin{proof}
Letting $\chi^\lambda_\mu(\theta)$ denote the coefficient of $p_\mu$ in the power sum expansion of the Jack symmetric function $P_\lambda^{(\theta)}$, we get from \eqref{varphinm} and \eqref{SP} that
$$
SP_\lambda^{(\theta)}(x,y) = \sum_{\mu}\chi^\lambda_\mu(\theta)p_{\mu,\theta}(x,y),
$$
where the sum extends over partitions $\mu$ such that $|\mu|=|\lambda|$. Just as in Lemma B.1 in \cite{DL15}, we use the Cauchy-Schwartz inequality to deduce the bound
$$
\big|SP_\lambda^{(\theta)}(x,y)\big|^2\leq \left(\sum_\mu \chi^\lambda_\mu(\theta)^2\theta^{-\ell(\mu)}z_\mu\right)\cdot \left(\sum_\mu |p_{\mu,\theta}(x,y)|^2\theta^{\ell(\mu)}z_\mu^{-1}\right)
$$
and, since the former sum amounts to the norm of $P_\lambda$ with respect to the scalar product $\langle\cdot,\cdot\rangle$ (cf.~\eqref{pOrth}), Stanley's formula \eqref{qNorms} amounts to
$$
\sum_\mu \chi^\lambda_\mu(\theta)^2\theta^{-\ell(\mu)}z_\mu = \frac{1}{b_\lambda^{(\theta)}}.
$$
Moreover, we have
\begin{equation*}
\begin{split}
|p_{\mu,\theta}(x,y)|^2\theta^{\ell(\mu)} &\leq p_{\mu,-\theta}((|x_1|,\ldots,|x_n|),(|y_1|,\ldots,|y_m|))^2\theta^{\ell(\mu)}\\
&\leq ||(x,y)||_\infty^{2|\mu|}\cdot \big(\sqrt{\theta}n+m/\sqrt{\theta}\big)^{2\ell(\mu)}.
\end{split}
\end{equation*}
Using $\ell(\mu)\leq |\mu|=|\lambda|$ and $\sum_{|\mu|=|\lambda|}z_\mu^{-1}=1$, we thus arrive at the bound
$$
|SP_\lambda^{(\theta)}(x,y)| \leq \frac{1}{\sqrt{b_\lambda^{(\theta)}}} \big(||(x,y)||_\infty\cdot \big(\sqrt{\theta}n+m/\sqrt{\theta}\big)\big)^{|\lambda|},
$$
which, when combined with \eqref{spec} with $X=n-m/\theta$ and \eqref{SC} yields
\begin{equation*}
\begin{split}
S^{(\theta)}(x,y;z,w) &:= \sum_{\lambda\in H_{n,m}}\left|\frac{1}{|\lambda|!}\frac{SC^{(\theta)}_\lambda(x,y)SC^{(\theta)}_\lambda(z,w)}{SC^{(\theta)}_\lambda(1^{n+m})}\right|\\
&\leq \sum_{\lambda\in H_{n,m}}\frac{\Big(||(x,y)||_\infty\cdot ||(z,w)||_\infty\cdot\big(\sqrt{\theta}n+m/\sqrt{\theta}\big)^2\Big)^{|\lambda|}}{\prod_{s\in\lambda}|\theta(n-l^\prime(s))-(m-a^\prime(s))|}.
\end{split}
\end{equation*}

For a partition $\lambda\in H_{n,m}$, we let
$$
e(\lambda) = (\langle\lambda_1-m\rangle,\cdots,\langle\lambda_n-m\rangle),\ \ \ s(\lambda) = (\langle\lambda^\prime_1-n\rangle,\cdots,\langle\lambda^\prime_m-n\rangle),
$$
with $\langle a\rangle=\max(0,a)$, so that $e(\lambda)$ and $s(\lambda)$ correspond to the set of boxes located to the `east' and `south', respectively, of the $m\times n$ rectangle $(m^n)$ in the diagram of $\lambda$, cf.~Fig.~1 in \cite{AHL19}. Then, we have
\begin{multline*}
\prod_{s\in\lambda}|\theta(n-l^\prime(s))-(m-a^\prime(s))|\\
= (\theta n)^{(\theta)}_{e(\lambda)}\cdot (\theta)^{|s(\lambda)|}(m/\theta)^{(1/\theta)}_{s(\lambda)}\cdot \prod_{s\in \lambda\cap(m^n)}|\theta(n-l^\prime(s))-(m-a^\prime(s))|.
\end{multline*}
As long as the restrictions \eqref{thetaVals} are in place, each factor in the product over boxes in $\lambda\cap(m^n)$ in the right-hand side is non-zero and independent of $\lambda$, which implies that the product is uniformly bounded below by some positive constant. Furthermore, under our assumption $\theta>0$, it is clear from \eqref{Pochh} that
$$
(\theta n)^{(\theta)}_{e(\lambda)} \geq \min\{1,\theta\}^{|e(\lambda)|}\cdot e(\lambda)!,\ \ (\theta)^{|s(\lambda)|}(m/\theta)^{(1/\theta)}_{s(\lambda)} \geq \min\{1,\theta\}^{|s(\lambda)|}\cdot s(\lambda)!.
$$
Hence, we can find constants $C,r>0$ such that
\begin{equation*}
\begin{split}
S^{(\theta)}(x,y;z,w) &\leq C\big(1+\big(||(x,y)||_\infty\cdot ||(z,w)||_\infty\big)^{mn}\big)\\
&\quad \cdot\sum_{(\mu,\nu)\in\mathbb{Z}_{\geq 0}^n\times\mathbb{Z}_{\geq 0}^m} \frac{\big(r\cdot ||(x,y)||_\infty\cdot ||(z,w)||_\infty\big)^{|\mu|+|\nu|}}{\mu!\nu!}.
\end{split}
\end{equation*}
Since the sum converges locally uniformly to $\exp((n+m)r\cdot ||(x,y)||_\infty\cdot ||(z,w)||_\infty)$, the assertion follows.
\end{proof}

\end{appendix}

\bibliographystyle{amsalpha}

\begin{thebibliography}{JHLM06}

\bibitem[AHL19]{AHL19}
Atai, F., Halln\"as, M., Langmann, E.:
Orthogonality of super-Jack polynomials and a Hilbert space interpretation of deformed Calogero--Moser--Sutherland operators. 
Bull.~Lond.~Math.~Soc.~51, 353--370 (2019).

\bibitem[AHL21]{AHL21}
Atai, F., Halln\"as, M., Langmann, E.:
Super-Macdonald polynomials: Orthogonality and Hilbert space interpretation.
Commun.~Math.~Phys.~388, 435--468 (2021).

\bibitem[AL17]{AL17}
Atai, F., Langmann, E.:
Deformed Calogero-Sutherland model and fractional quantum Hall effect.
J.~Math.~Phys.~58, 011902, 27pp.~(2017).

\bibitem[BF97]{BF97}
Baker, T.H., Forrester, P.J.:
The Calogero-Sutherland model and generalized classical polynomials.
Commun.~Math.~Phys.~188, 175--216 (1997).

\bibitem[BF98]{BF98}
Baker, T.~H., Forrester, P.~J.:
Nonsymmetric Jack polynomials and integral kernels. 
Duke Math.~J.~95, 1--50 (1998).

\bibitem[BC20]{BC20}
Berest, Y., Chalykh, O.: Deformed Calogero--Moser operators and ideals of rational Cherednik algebras.
arXiv:2002.08691 (2020).

\bibitem[BLL20]{BLL20}
Berntson, B.K., Langmann, E., Lenells, J.:
Nonchiral intermediate long-wave equation and inter-edge effects in narrow quantum Hall systems.
Phys.~Rev.~B 102, 155308, 14pp.~(2020).

\bibitem[BR23]{BR23}
Brennecken, D, R\"osler, M.:
The Dunkl--Laplace transform and Macdonald's hypergeometric series.
Trans.~Amer.~Math.~Soc.~376, 2419--2447 (2023).

\bibitem[CKL20]{CKL20}
Chen, H.-Y., Kimura, T., Lee, N.:
Quantum elliptic Calogero-Moser systems from gauge origami.
J.~High Energy Phys.~2020, 108, 40 pp.~(2020).

\bibitem[CFV98]{CFV98}
Chalykh, O., Feigin, M., Veselov, A.P.:
New integrable generalizations of Calogero-Moser quantum problem.
J.~Math.~Phys.~39, 695--703 (1998).

\bibitem[DH12]{DH12}
Desrosiers, P., Halln\"as, M.:
Hermite and Laguerre symmetric functions associated with operators of Calogero--Moser--Sutherland type.
SIGMA Symmetry Integrability Geom. Methods Appl.~8, Paper 049, 51 pp.~(2012). 

\bibitem[DL15]{DL15}
Desrosiers, P., Liu, D.-Z.:
Selberg integrals, super-hypergeometric functions and applications to $\beta$-ensembles of random matrices.
Random Matrices Theory Appl.~4, 1550007, 59 pp.~(2015).

\bibitem[vDie97]{vDie97}
van Diejen, J.F.:
Confluent hypergeometric orthogonal polynomials related to the rational quantum Calogero system with harmonic confinement.
Commun.~Math.~Phys.~188, 467--497 (1997).

\bibitem[Dun89]{Dun89}
Dunkl, C.F.:
Differential-difference operators associated to reflection groups. 
Trans.~Amer.~Math.~Soc.~311, 167--183 (1989).

\bibitem[Dun91]{Dun91}
Dunkl, C.F.:
Integral kernels with reflection group invariance. 
Canad.~J.~Math.~43, 1213--1227 (1991). 

\bibitem[Eti07]{Eti07}
Etingof, P.:
Calogero--Moser systems and representation theory. 
Zurich Lectures in Advanced Mathematics.
European Mathematical Society (EMS), Zürich (2007).

\bibitem[Fei12]{Fei12}
Feigin, M.: Generalized Calogero--Moser systems from rational Cherednik algebras. 
Selecta Math.~(N.S.)~18, 253--281 (2012).

\bibitem[FHV13]{FHV13}
Feigin, M., Halln\"as, M., Veselov, A.P.:
Baker--Akhiezer functions and generalised Macdonald--Mehta integrals.
J.~Math.~Phys.~54, 052106 (2013).

\bibitem[FHV21]{FHV21}
Feigin, M., Halln\"as, M., Veselov, A.P.:
Quasi-invariant Hermite polynomials and Lassalle-Nekrasov correspondence. 
Commun.~Math.~Phys.~386, 107--141 (2021).

\bibitem[FV02]{FV02}
Feigin, M., Veselov, A. P.:
Quasi-invariants of Coxeter groups and m-harmonic polynomials. 
Int.~Math.~Res.~Not.~2002, 521--545 (2002).

\bibitem[FV03]{FV03}
Feigin, M., Veselov, A. P.:
Quasi-invariants and quantum integrals of the deformed Calogero--Moser systems. 
Int.~Math.~Res.~Not.~2003, 2487--2511 (2003).

\bibitem[Hec91]{Hec91}
Heckman, G.~J.:
A remark on the Dunkl differential-difference operators, in: Harmonic analysis on reductive groups. 
Progr.~Math.~101, 181--191, Birkhäuser Boston, Boston, MA (1991). 

\bibitem[Jac70]{Jac70}
Jack, H.:
A class of symmetric polynomials with a parameter.
Proc.~Roy.~Soc.~Edinburgh Sect.~A 69, 1--18 (1970/1971).

\bibitem[JHLM06]{JHLM06}
Jack, Hall-Littlewood and Macdonald polynomials. 
Proceedings of the workshop held in Edinburgh, September 23–26, 2003. Edited by V.B. Kuznetsov and S.~Sahi. Contemp.~Math.~417.
American Mathematical Society, Providence, RI (2006).

\bibitem[Kan93]{Kan93}
Kaneko, J.:
Selberg integrals and hypergeometric functions associated with Jack polynomials.
SIAM J.~Math.~Anal.~24, 1086--1110 (1993).

\bibitem[KOO98]{KOO98}
Kerov, A., Okounkov, A., Olshanski, G.:
The boundary of the Young graph with Jack edge multiplicities.
Internat.~Math.~Res.~Notices 1998, 173--199 (1998).

\bibitem[Las91]{Las91}
Lassalle, M.:
Polyn\^omes de Hermite g\'en\'eralis\'es.
C.~R.~Acad.~Sci.~Paris S\'er.~I Math.~313, 579--582 (1991).

\bibitem[Mac80]{Mac80}
Macdonald, I.G.:
The volume of a compact Lie group. 
Invent.~Math.~56, 93--95 (1980).

\bibitem[Mac95]{Mac95}
Macdonald, I.G.:
Symmetric functions and Hall polynomials, 2nd edn.
Oxford University Press, New York (1995).

\bibitem[Mac13]{Mac13}
Macdonald, I.G.:
Hypergeometric functions I.
arXiv:1309.4568 (2013).

\bibitem[Nek97]{Nek97}
Nekrasov, N.:
On a duality in Calogero--Moser--Sutherland systems.
arXiv:hep-th/9707111 (1997).

\bibitem[Nek17]{Nek17}
Nekrasov, N.:
BPS/CFT correspondence V: BPZ and KZ equations from qq-characters,
arXiv:1711.11582 (2017).

\bibitem[OO97]{OO97}
Okounkov, A., Olshanski, G.:
Shifted Jack polynomials, binomial formula, and applications. 
Math.~Res.~Lett.~4, 69--78 (1997).

\bibitem[OP83]{OP83}
Olshanetsky, M.~A., Perelomov, A.~M.:
Quantum integrable systems related to Lie algebras. 
Phys.~Rep.~94, 313--404 (1983).

\bibitem[Opd93]{Opd93}
Opdam, E.~M.:
Dunkl operators, Bessel functions and the discriminant of a finite Coxeter group. 
Compositio Math.~85, 333--373 (1993).

\bibitem[Ros98]{Ros98}
R\"osler, M.:
Generalized Hermite polynomials and the heat equation for Dunkl operators. 
Commun.~Math.~Phys.~192, 519--542 (1998). 

\bibitem[Rui99]{Rui99}
Ruijsenaars, S.N.M.:
Systems of Calogero--Moser type, in: Particles and fields.
CRM Ser.~Math.~Phys., 251--352, Springer, New York (1999).

\bibitem[Ser01]{Ser01}
Sergeev, A.N.:
Superanalogs of the Calogero operators and Jack polynomials.
J.~Nonlinear Math.~Phys.~8, 59--64 (2001).

\bibitem[Ser02]{Ser02}
Sergeev, A.N.:
The Calogero operator and Lie superalgebras.
Theoret.~and Math.~Phys.~131, 747--764 (2002).

\bibitem[SV04]{SV04}
Sergeev, A.N., Veselov, A.P.:
Deformed quantum Calogero-Moser problems and Lie superalgebras.
Commun.~Math.~Phys.~245, 249--278 (2004).

\bibitem[SV05]{SV05} 
Sergeev, A.N., Veselov, A.P.:
Generalised discriminants, deformed Calogero-Moser-Sutherland operators and super-Jack polynomials.
Adv.~Math.~192, 341--375 (2005).

\bibitem[SV09]{SV09}
Sergeev, A.N., Veselov, A.P.:
$BC_\infty$ Calogero--Moser operator and super Jacobi polynomials.
Adv.~Math.~222, 1687--1726 (2009). 

\bibitem[SV15]{SV15}
Sergeev, A.N., Veselov, A.P.:
Dunkl operators at infinity and Calogero--Moser systems.
Internat.~Math.~Res.~Notices 2015, 10959--10986 (2015).

\bibitem[Sta89]{Sta89}
Stanley, R.P.:
Some combinatorial properties of Jack symmetric functions. 
Adv.~Math.~77, 76--115 (1989). 

\bibitem[Sta99]{Sta99}
Stanley, R.P.:
Enumerative combinatorics. Vol.~2.
Cambridge University Press, Cambridge (1999).

\end{thebibliography}

\end{document}